\newtheorem{theorem}{Theorem}
\newtheorem{corollary}[theorem]{Corollary}
\newtheorem{lemma}[theorem]{Lemma}
\newtheorem{observation}[theorem]{Observation}
\newcommand{\GG}{\mathcal{G}}
\newcommand{\ch}{\mathrm{ch}}
\begin{document}
\title{Clustered coloring of $(\text{path}+2K_1)$-free graphs on surfaces}
\author{Zden\v{e}k Dvo\v{r}\'ak
	\thanks{Charles University, Prague, Czech Republic. E-mail: \protect\href{mailto:rakdver@iuuk.mff.cuni.cz}{\protect\nolinkurl{rakdver@iuuk.mff.cuni.cz}}.
	Supported by project 22-17398S (Flows and cycles in graphs on surfaces) of Czech Science Foundation.}
}
\date{}

\maketitle

\begin{abstract}
Esperet and Joret proved that planar graphs with bounded maximum degree are
3-colorable with bounded clustering.  Liu and Wood asked whether the conclusion
holds with the assumption of the bounded maximum degree replaced by assuming that
no two vertices have many common neighbors.  We answer this question in positive,
in the following stronger form: Let $P''_t$ be the complete join of two isolated vertices
with a path on $t$ vertices.  For any surface $\Sigma$, a subgraph-closed class of graphs
drawn on $\Sigma$ is 3-choosable with bounded clustering if and only if there exists $t$
such that $P''_t$ does not belong to the class.
\end{abstract}

A famous conjecture of Hadwiger postulates that for every positive integer $k$, every $K_{k+1}$-minor-free
graph is properly $k$-colorable.  This conjecture is widely open, only known to be true for $k\le 6$ by a reduction
to the Four Color Theorem~\cite{robertsonseymourthomas} and with the best known upper bound on the
chromatic number of $K_{k+1}$-minor-free graphs in general being superlinear~\cite{delcourt2021reducing}, on the order of $O(k\log\log k)$.

Hence, it is natural to ask whether Hadwiger's conjecture holds at least in some relaxed sense.
One of the best studied relaxations is in the setting of \emph{clustered coloring}.
Throughout the paper, by a \emph{coloring} of a graph $G$ we mean an arbitrary assignment $\varphi$ of colors to vertices
of $G$; in particular, we do not require the colorings to be proper, unless specified otherwise.
A subgraph $H$ of $G$ is \emph{monochromatic} in $\varphi$ if $\varphi(u)=\varphi(v)$ for all $u,v\in V(H)$.
A \emph{cluster} is the vertex set of a maximal monochromatic connected subgraph, and the \emph{clustering} of
a coloring $\varphi$ is the size of the largest cluster.  Note that a proper coloring is exactly a coloring with
clustering one, and on the other extreme, every graph $G$ has a coloring by one color with clustering at most $V(G)$.
Hence, it does not make sense to speak about the clustered chromatic number of a graph without specifying the cluster
size.  However, a definition focusing purely on the number of colors is possible for graph classes.
The \emph{clustered chromatic number} of a graph class $\GG$ is the minimum integer $c$ such that
for some positive integer $\gamma$, every graph in $\GG$ has a coloring by at most $c$ colors with clustering at most $\gamma$.
If no such integer $c$ exists, the clustered chromatic number of $\GG$ is $\infty$.

What is the clustered chromatic number of the class of $K_{k+1}$-minor-free graphs?  Edwards et al.~\cite{edwards2014relative}
give a construction showing that it is at least $k$ (actually, they show a stronger claim: For every $\gamma$, there
exists a $K_{k+1}$-minor-free graph that in every coloring by $k-1$ colors contains a monochromatic subgraph of maximum degree at least $\gamma$).
On the other hand, if Hadwiger's conjecture is true, then the clustered chromatic number of $K_{k+1}$-minor-free graphs clearly is at most $k$.
Building upon a series of improved bounds~\cite{kawarabayashi2007relaxed,wood2010contractibility,edwards2014relative,liusmall,van2018improper},
Dvořák and Norin~\cite{islands} announced a proof of this relaxed version of Hadwiger's conjecture (the full details are still unpublished;
recently, Dujmovi\'{c} et al.~\cite{clushadalt} gave a proof by a different method).

As an important special case, the planar graphs have clustered chromatic number at most four.  This follows from the Four Color Theorem,
but much simpler proofs are known~\cite{cowen1986defective,islands}.  It is also known that this bound cannot be improved,
based on the following standard observation.  For a positive integer $t$, let $P''_t$ denote the complete join of two isolated vertices with a $t$-vertex path
(see Figure~\ref{fig-forb} for an illustration).
\begin{figure}
\begin{center}
\includegraphics{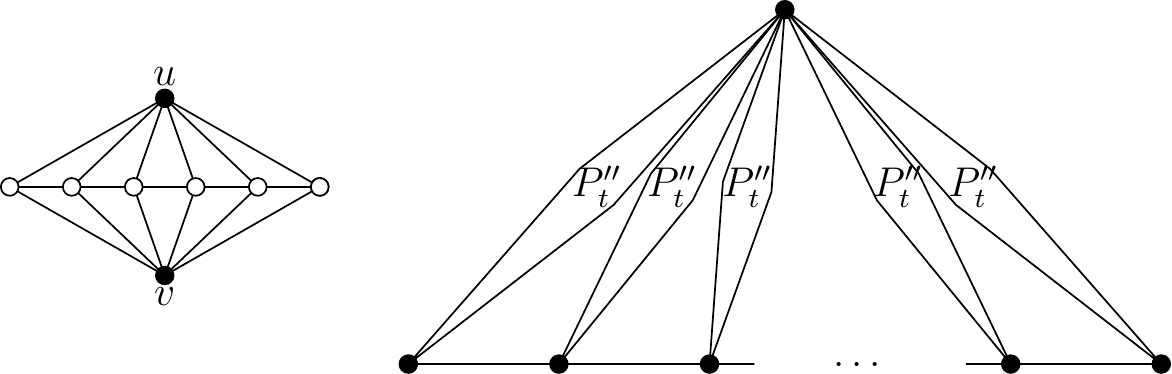}
\end{center}
\caption{The graph $P''_6$ and a construction of a planar graph that is not 3-colorable with small clustering.}\label{fig-forb}
\end{figure}

\begin{observation}\label{obs-crucial}
Let $\gamma$ be a positive integer, let $t=(2\gamma-1)(\gamma+1)$ and let $u$ and $v$ be the two vertices of $P''_t$ of degree $t$.
If $\varphi$ is a $3$-coloring of $P''_t$ of clustering at most $\gamma$, then $\varphi(u)=\varphi(v)$.
\end{observation}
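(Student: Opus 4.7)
The plan is to proceed by contradiction. Suppose $\varphi$ is a $3$-coloring of $P''_t$ with clustering at most $\gamma$ in which $\varphi(u)\neq\varphi(v)$. Write the three colors as $\{1,2,3\}$ and normalize so that $\varphi(u)=1$ and $\varphi(v)=2$, and let $p_1,p_2,\ldots,p_t$ be the vertices of the spine path of $P''_t$ in order.

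The first observation will be that any path vertex $p_i$ with $\varphi(p_i)=1$ lies in the same monochromatic component as $u$, because $p_iu$ is an edge and both have color $1$. Therefore the cluster containing $u$ includes all color-$1$ path vertices, and since this cluster has size at most $\gamma$, at most $\gamma-1$ of the $p_i$ receive color $1$. The symmetric argument for $v$ gives at most $\gamma-1$ path vertices of color $2$.

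Next I would analyze the color-$3$ vertices on the path. Since $u$ and $v$ do not have color $3$, a maximal monochromatic connected subgraph of color $3$ is exactly a maximal run of consecutive $p_i$'s all colored $3$; each such run is a cluster and hence has at most $\gamma$ vertices. Moreover, distinct color-$3$ runs along the path must be separated by at least one path vertex of color $1$ or $2$, so if there are $k$ color-$3$ runs then $k-1$ is at most the total number of non-color-$3$ path vertices, which is at most $2(\gamma-1)$; hence $k\le 2\gamma-1$ and the total number of color-$3$ path vertices is at most $(2\gamma-1)\gamma$.

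Summing the three contributions yields
\[
 t \;\le\; 2(\gamma-1) + (2\gamma-1)\gamma \;=\; 2\gamma^{2}+\gamma-2 \;<\; (2\gamma-1)(\gamma+1) \;=\; t,
\]
a contradiction. So the argument reduces to a tight counting check; there is no real obstacle beyond carefully justifying that all color-$1$ (resp.\ color-$2$) path vertices really do merge into the cluster of $u$ (resp.\ $v$), which is immediate from the join structure of $P''_t$. The choice $t=(2\gamma-1)(\gamma+1)$ is exactly what makes the final inequality fail by one, which explains the specific constant in the statement.
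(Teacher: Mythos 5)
Your proof is correct and follows essentially the same argument as the paper: bound the color-1 and color-2 path vertices by $\gamma-1$ each via the clusters of $u$ and $v$, bound the color-3 vertices by counting at most $2\gamma-1$ runs of size at most $\gamma$, and derive the contradiction $t\le(2\gamma-1)(\gamma+1)-1$. The only difference is that you spell out the run-counting step slightly more explicitly than the paper does.
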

\begin{proof}
Suppose for a contradiction that $\varphi(u)=1$ and $\varphi(v)=2$.  Consider the $t$-vertex path $P=P''_t-\{u,v\}$.
Since $\varphi$ has clustering at most $\gamma$, at most $\gamma-1$ vertices of $P$ have color $1$ and at most $\gamma-1$ vertices
of $P$ have color two.  Hence, the subgraph of $P$ induced by vertices of color $3$ has at most $2\gamma-1$ components,
and since each monochromatic component has size at most $\gamma$, we have $t=|V(P)|\le 2\gamma-2 + (2\gamma-1)\gamma=(2\gamma-1)(\gamma+1)-1$.
This is a contradiction.
\end{proof}
Based on this observation, it is easy to construct a planar graph with no $3$-coloring with clustering at most $\gamma$,
e.g., as illustrated in Figure~\ref{fig-forb}.  However, the number of colors can be improved if we restrict the attention to graphs with bounded maximum degree,
even in more general setting of graphs on surfaces.
\begin{theorem}[Esperet and Joret~\cite{espjor}]\label{thm-maxdeg}
For every integer $\Delta\ge 6$ and any surface $\Sigma$, the class of graphs of maximum degree at most $\Delta$ drawn on $\Sigma$
has clustered chromatic number three.
\end{theorem}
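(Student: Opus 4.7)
The plan is to combine a BFS layering of $G$ with the bounded-maximum-degree hypothesis and a layered-treewidth theorem for graphs on surfaces. Assume $G$ is connected and drawn on $\Sigma$ with $\Delta(G)\le\Delta$, fix a root $r\in V(G)$, and let $L_i=\{v:\mathrm{dist}_G(r,v)=i\}$. Every edge of $G$ lies either inside a layer or between two consecutive layers, so the naive 3-coloring $\varphi_0(v)=i\bmod 3$ for $v\in L_i$ already has every monochromatic connected subgraph confined to a single $L_i$. The real task is to refine $\varphi_0$ so that the within-layer monochromatic components become bounded in size, while still using only three colors.

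To control $G[L_i]$ I would invoke the theorem of Dujmović, Morin and Wood that graphs on $\Sigma$ have \emph{layered treewidth} bounded by a constant $w=w(\Sigma)$; applied to the BFS layering above, this forces every three consecutive layers to induce a subgraph of treewidth at most $3w$. In particular $G[L_i]$ has bounded treewidth and maximum degree $\le\Delta$, so it has very ``path-like'' structure on long distance scales. Inside each $L_i$ I would select a set $S_i\subseteq L_i$ that is $G[L_i]$-independent, 2-dominating in $G[L_i]$, and sparse enough that no two elements of $S_i$ are $G$-close across different layers. Recoloring each $v\in S_i$ with $(i+1)\bmod 3$ when $i$ is even and $(i-1)\bmod 3$ when $i$ is odd breaks every within-layer monochromatic connected subgraph into pieces of $G[L_i]$-diameter at most some constant $D=D(w,\Delta)$; combined with $\Delta(G)\le\Delta$, each such piece has at most $\Delta^D$ vertices.

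The main obstacle I expect is verifying that this refinement does not spawn large cross-layer monochromatic components: a recoloured vertex in $L_i$ now matches some of its neighbours in an adjacent layer, and those adjacent layers are themselves being perturbed. I would handle this by a local analysis in the three-layer window $L_{i-1}\cup L_i\cup L_{i+1}$, whose bounded treewidth together with the parity-alternated direction of the swap guarantees that any would-be monochromatic component created by the recolouring has bounded diameter in the window; bounded max degree again bounds its size. All constants are absorbed into a single $\gamma=\gamma(\Sigma,\Delta)$, giving a 3-coloring with clustering $\le\gamma$. Combined with Observation~\ref{obs-crucial}, which provides the matching lower bound of three colors (since $P''_t$ can be drawn on any surface and has maximum degree $\le\Delta$ once $\Delta\ge 6$), the theorem follows.
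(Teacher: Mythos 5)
The paper does not actually prove this statement---it is quoted from Esperet and Joret~\cite{espjor}, with the stronger Theorem~\ref{thm-layer} of Liu and Wood~\cite{liu2019clustered} cited as a generalization---so your layered-treewidth route is reasonable in spirit (it is close to Liu--Wood), but as written both of its key steps are asserted rather than proved, and at least one is false as stated. First, an independent, $2$-dominating set $S_i$ in $G[L_i]$ need not break $G[L_i]$ into small pieces when removed or recolored: already in a ladder (treewidth $2$, maximum degree $3$) one can choose an independent $2$-dominating set whose deletion leaves the graph connected, so the claim that the recoloring leaves within-layer monochromatic pieces of bounded diameter does not follow from the stated properties of $S_i$. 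What is needed is a set whose removal leaves components of bounded \emph{size}, and producing such a set while using only three colors globally is exactly the content of the theorem; the standard tool here is a result in the style of Alon, Ding, Oporowski and Vertigan that graphs of bounded treewidth and bounded degree admit $2$-colorings with bounded clustering, which your sketch never invokes. Second, the cross-layer percolation problem is the crux and is waved away. With your parity rule, $S_{i+1}$ and $S_{i+2}$ (for even $i$) both receive the base color of layers $L_i$ and $L_{i+3}$, so a single color class can contain bulk pieces of $L_i$, vertices of $S_{i+1}$ and $S_{i+2}$, and bulk pieces of $L_{i+3}$; even at a single interface, a monochromatic path may alternate indefinitely between bulk components of $L_i$ and recolored vertices of $S_{i+1}$, and neither bounded degree nor the bounded treewidth of the three-layer window rules this out---the whole difficulty of the theorem lives inside such a window, so asserting ``bounded diameter in the window'' assumes precisely what must be proved.

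Your lower-bound argument is also incorrect: $P''_t$ has two vertices of degree $t$, so for $t>\Delta$ it does not belong to the class of graphs of maximum degree at most $\Delta$, and Observation~\ref{obs-crucial} concerns $3$-colorings, not the impossibility of $2$-coloring with bounded clustering. The fact that two colors do not suffice for planar graphs of maximum degree six comes from the Hex Lemma, as the paper remarks immediately after the theorem (see~\cite{wood2018defective}). So both halves of the statement---the upper bound of three and the lower bound of three---currently rest on unsubstantiated or wrong steps.
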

Let us remark that the number of colors cannot be improved, since already the class of planar graphs
of maximum degree six does not have clustered chromatic number two by the Hex Lemma (see e.g. \cite{wood2018defective} for details).
Liu and Wood~\cite{liu2019clustered} substantially strengthened Theorem~\ref{thm-maxdeg}, showing that the
same is true for any class of graphs with bounded layered treewidth (we do not need the precise definition of this concept;
it suffices to know that graphs drawn on any fixed surface have bounded layered treewidth, but there also are many other
graph classes with this property).  Actually, they proved even
stronger result.  Throughout the paper, by an \emph{$F$-free} graph we mean a graph that does not contain a (not necessarily induced) subgraph isomorphic to $F$.
\begin{theorem}[Liu and Wood~\cite{liu2019clustered}]\label{thm-layer}
For any positive integers $c$ and $t$, any class of $K_{c,t}$-free graphs
of bounded layered treewidth has clustered chromatic number at most $c+2$.
\end{theorem}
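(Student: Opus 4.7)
The plan is to prove a bounded-ordinary-treewidth version of the statement first and then lift it to the layered-treewidth setting by a BFS-based pasting argument.

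The main auxiliary lemma I would establish is the following: for any positive integers $c,t,w$, every $K_{c,t}$-free graph of treewidth at most $w$ admits a $(c+2)$-coloring with clustering bounded by some $\gamma_1=\gamma_1(c,t,w)$. The proof is by induction along a rooted tree decomposition of width $w$. When processing a newly introduced vertex $v$ in a bag, one picks a color that does not extend an already-large monochromatic component at $v$; the $K_{c,t}$-free hypothesis guarantees that among the $c+2$ palette colors at least one is ``safe'' for $v$, because otherwise a $K_{c,t}$-subgraph would be located in the previously-colored neighborhood of $v$ by a counting argument.

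Granting this auxiliary lemma, let $G$ be $K_{c,t}$-free with layered treewidth at most $w$, fix a BFS-like layering $V(G)=L_0\cup L_1\cup\cdots$ together with a compatible layered tree decomposition, and for each $i\ge 0$ let $G_i$ be the subgraph of $G$ induced by $L_{2i}\cup L_{2i+1}$. Each $G_i$ has treewidth at most $2w-1$ and is $K_{c,t}$-free, so the auxiliary lemma supplies a $(c+2)$-coloring $\psi_i$ of $G_i$ with clustering at most $\gamma_2:=\gamma_1(c,t,2w-1)$. To paste the $\psi_i$ into a single $(c+2)$-coloring $\varphi$ of $G$, I would reuse the same palette on every block but apply an interface correction: for each edge $vu$ with $v\in L_{2i+1}$, $u\in L_{2i+2}$, and $\psi_i(v)=\psi_{i+1}(u)$, the color of $v$ in $\varphi$ is overwritten with one of the two ``interface'' colors $c+1,c+2$ chosen according to the parity of $i$.

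The main obstacle is to bound the clusters of $\varphi$. Monochromatic components entirely inside some $G_i$ have size at most $\gamma_2$, and components of colors $1,\ldots,c$ cannot cross an interface $L_{2i+1}\leftrightarrow L_{2i+2}$ thanks to the correction rule. The delicate case is the monochromatic components of the interface colors $c+1,c+2$, which lie inside single BFS layers but could a priori be large; here the $K_{c,t}$-free hypothesis is essential, because a large recolored set inside $L_{2i+1}$ would produce many same-color adjacencies across the interface and, via a double-counting argument on color classes, yield a $K_{c,t}$-subgraph in the bipartite graph between $L_{2i+1}$ and $L_{2i+2}$. Turning this into a clean quantitative bound on the interface-color clusters, while coordinating the parity choice so that only two interface colors suffice globally (keeping the palette at $c+2$ rather than inflating it to $2(c+2)$), is the technical heart of the argument and the point where $K_{c,t}$-freeness is used most essentially.
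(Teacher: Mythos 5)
This statement is not proved in the paper at all: it is quoted from Liu and Wood \cite{liu2019clustered}, whose actual argument is a rather delicate induction that treats the layering and the tree decomposition simultaneously, with a controlled set of precolored boundary vertices. Judged on its own terms, your sketch has two genuine gaps. First, the proof you outline for the auxiliary lemma (treewidth at most $w$ plus $K_{c,t}$-freeness gives $c+2$ colors with bounded clustering) does not work as stated. If you color vertices greedily along a rooted tree decomposition, the previously colored neighbors of a newly introduced vertex $v$ all lie in the bag introducing $v$, so there are at most $w$ of them; with $w\ge c+2$ it is perfectly possible for $v$ to see a large monochromatic component in every color without any $K_{c,t}$ appearing, so the claimed counting argument yielding a ``safe'' color is unsubstantiated. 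More importantly, choosing a color that avoids currently large components does not control the clustering: components grow and merge after $v$ is colored, because arbitrarily many later vertices can attach to the same component one at a time. This is exactly the difficulty that forces Liu and Wood's inductive set-up (tracking how precolored or already-large sets interact with bags, which is where $K_{c,t}$-freeness is really used); the lemma itself is true (even with $c+1$ colors), but your sketch does not prove it.

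Second, the pasting step is not only unfinished but, as described, broken. Since the interface colors $c+1,c+2$ belong to the same palette used by the block colorings $\psi_i$, a vertex $v\in L_{2i+1}$ recolored to $c+1$ can be adjacent to vertices of $L_{2i}\cup L_{2i+1}$ already colored $c+1$ by $\psi_i$, and also to vertices of $L_{2i+2}$ colored $c+1$ by $\psi_{i+1}$ (your rule only reacts to the \emph{original} color of $v$ and never recolors the upper endpoint), so interface-color components are not confined to single layers as you claim. Even if they were, their size is not controlled by $K_{c,t}$-freeness: a vertex is recolored as soon as it has a \emph{single} conflicting cross edge, so a long path inside $L_{2i+1}$ whose vertices each have one private same-colored neighbor in $L_{2i+2}$ gets entirely recolored to one interface color and becomes an unbounded cluster, with no dense bipartite pattern and hence no $K_{c,t}$ to contradict. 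You correctly identify this as the technical heart, but no argument is offered, and the difficulty does not appear fixable by a parity trick alone; one essentially has to choose the block colorings adaptively relative to the interfaces, which is what the genuine proof's joint induction on the layering and the decomposition accomplishes.
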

Note that a graph has maximum degree at most $\Delta$ if and only if it is $K_{1,\Delta+1}$-free,
and thus Theorem~\ref{thm-layer} with $c=1$ implies Theorem~\ref{thm-maxdeg}.
They asked whether this result can be strengthened for planar graphs---specifically, do $K_{2,t}$-free planar graphs
have clustered chromatic number at most three?

Our main result is the positive answer to this question, even for graphs drawn on any fixed surface.
Actually, we show that the claim holds for $P''_t$-free graphs, whose relevance to the
problem is clear from Observation~\ref{obs-crucial}.

\begin{theorem}\label{thm-main1}
For every surface $\Sigma$ and positive integer $t$, the class of $P''_t$-free graphs drawn on $\Sigma$
has clustered chromatic number at most three.
\end{theorem}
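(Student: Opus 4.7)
I would prove Theorem~\ref{thm-main1} by induction on the Euler genus of $\Sigma$, with the planar case as the heart of the argument. For the inductive step, I would use a standard surgery: cutting $\Sigma$ along a short noncontractible curve of the embedding yields a graph drawable on a surface of strictly smaller Euler genus. Since the class of $P''_t$-free graphs is subgraph-closed, the hypothesis is preserved, and bounded-clustering colorings of the pieces can be glued across the cut to give a bounded-clustering coloring of $G$.

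\textbf{Planar base case.} For planar $G$, fix a root $r$ and let $L_0, L_1, L_2, \ldots$ denote the BFS layers. The starting coloring $\varphi_0(v) = d_G(r,v) \bmod 3$ confines every monochromatic component to a single induced subgraph $G[L_i]$. This alone does not suffice: a wheel shows that $G[L_i]$ can itself contain arbitrarily long cycles in a planar $P''_t$-free graph. The plan is then to repair $\varphi_0$ by recoloring, layer by layer outward from $L_0$. In each layer $L_i$ I would choose a sparse set $R_i \subseteq L_i$ and recolor its vertices with one of the two adjacent colors $(i\pm 1) \bmod 3$, chosen so that both (i) every large component of $G[L_i]$ is broken into pieces of bounded size, and (ii) the recolored vertices do not fuse with other same-colored vertices of nearby layers into a large monochromatic component. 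The key structural input is the dual of Observation~\ref{obs-crucial}: in a $P''_t$-free graph no two vertices can jointly dominate a $t$-vertex path, so for any long ``spine'' in $L_i$, only a bounded portion of it can be covered by the neighborhood of any pair of vertices of $L_{i-1}\cup L_{i+1}$. Using the plane embedding to order $L_i$ along a cyclic curve, this becomes an interval-overlap restriction that allows $R_i$ to be chosen with the required properties.

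\textbf{Main obstacle.} The principal difficulty is cross-layer consistency of the recoloring: a recolored vertex of $L_i$ with new color $(i-1)\bmod 3$ can potentially merge with vertices of $L_{i-1}$ (or further back, via layers whose mod-$3$ index matches) into a large monochromatic component, undoing earlier progress. The proof must maintain, as an inductive invariant, that after processing layers $L_0,\ldots,L_i$ all monochromatic components of the partially recolored graph have size bounded by an effective function of $t$ and the genus. Establishing a sufficiently quantitative, surface-aware structural lemma generalizing Observation~\ref{obs-crucial}, and coordinating the layer-by-layer recoloring using that lemma, is where most of the work would concentrate.
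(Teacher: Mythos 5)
Your proposal is a plan rather than a proof, and two of its load-bearing steps fail as stated. First, the genus-reduction surgery: you cannot in general cut along a ``short'' noncontractible curve, because the face-width of the embedding may be arbitrarily large, so every noncontractible curve may meet the graph in an unbounded number of vertices. Moreover, even when a short cut exists (say a noncontractible triangle), gluing bounded-clustering colorings of the two pieces is not automatic: the two copies of each cut vertex must receive the same color, so one needs a precoloring-extension version of the statement, and for clustered coloring such extensions are genuinely problematic --- Observation~\ref{obs-crucial} exhibits a precoloring of just two vertices that cannot be extended with small clusters, and the complete rooted planar $3$-trees discussed before Lemma~\ref{lemma-stacks} show that a precolored triangle can force cluster growth unless the structure inside it is controlled. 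This is exactly why the paper proves the stronger precoloring-choosability statement (Theorem~\ref{thm-main}), cuts only along noncontractible triangles (three vertices), and, when none exist, handles the surface directly by the discharging and island arguments rather than reducing to the planar case.

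Second, the planar base case is where essentially all the difficulty lives, and your sketch does not supply the lemma it needs. After the BFS layering, monochromatic components are indeed confined to the induced subgraphs $G[L_i]$, but these are arbitrary planar graphs, not ``spines'' that can be cyclically ordered so that neighborhoods of outside vertices become intervals; and any repair uses one of only two adjacent colors, each of which risks merging with components of $L_{i-1}$ or $L_{i+1}$, possibly cascading backwards. Maintaining your cross-layer invariant is precisely the unproved content, as you yourself note. Observe also that the layered setting with the hypothesis ``no two vertices jointly dominate a long path'' is essentially that of Theorem~\ref{thm-layer}, whose method gives $c+2=4$ colors for $K_{2,t}$-free graphs of bounded layered treewidth; getting down to three colors is the whole point of Theorem~\ref{thm-main1}, and the paper does it by a different route: an island lemma for hereditarily $(3-\varepsilon,O(g))$-sparse graphs, sparsifier configurations plus discharging to reach that sparsity in the absence of separating triangles, a precoloring analysis of $P''_t$-free rooted planar $3$-trees to absorb contractible separating triangles, and a final cut along noncontractible triangles. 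Without a concrete substitute for these steps, your outline does not constitute a proof.
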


The proof of Theorem~\ref{thm-main1} is inspired by the argument used by myself and Norin~\cite{weak} to extend
Theorem~\ref{thm-maxdeg} to the list coloring setting.
For an assignment $L$ of lists to vertices of a graph $G$, an \emph{$L$-coloring}
is any function $\varphi:V(G)\to\bigcup_{v\in V(G)} L(v)$ such that $\varphi(v)\in L(v)$ for every $v\in V(G)$.
For a positive integer $c$, a \emph{$c$-list-assignment} is an assignment of lists of size $c$.
For a positive integer $\gamma$, a graph $G$ is \emph{$c$-choosable with clustering at most $\gamma$}
if $G$ has an $L$-coloring with clustering at most $\gamma$ for every $c$-list-assignment $L$.
Note that this generalizes the standard notion of choosability: A graph is $c$-choosable if and only if
it is $c$-choosable with clustering one.
The \emph{clustered choosability} of a graph class $\GG$ is the minimum number $c$ such that
for some positive integer $\gamma$, every graph in $\GG$ is $c$-choosable with clustering at most $\gamma$.

Thomassen~\cite{thomassen1994} famously proved that planar graphs are 5-choosable, while Voigt~\cite{voigt1993}
gave a construction of non-4-choosable planar graphs.  In contrast, the argument of~\cite{islands}
shows that planar graphs have clustered choosability at most four.  Wood~\cite[Open Problem 18]{wood2018defective}
asked whether Theorem~\ref{thm-maxdeg} generalizes to the list coloring setting; i.e., is it true that for every $\Delta\ge 6$, the class of planar graphs (or more generally,
graphs drawn on a fixed surface) of maximum degree at most $\Delta$ has clustered choosability three?
In~\cite{weak}, we gave a positive answer to this question.  The proof of Theorem~\ref{thm-main1}
is based on the idea developed in~\cite{weak} and naturally gives the result in the list coloring setting as well.

\begin{theorem}\label{thm-main2}
For every surface $\Sigma$ and positive integer $t$, the class of $P''_t$-free graphs drawn on $\Sigma$
has clustered choosability at most three.
\end{theorem}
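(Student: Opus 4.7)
The plan is to adapt the approach of~\cite{weak}, which handled the bounded maximum degree case in the list coloring setting, by replacing local degree bounds with structural consequences of $P''_t$-freeness. Since lists of size three give only limited flexibility, the proof will proceed by carefully extending a partial coloring along a BFS layering, using $P''_t$-freeness to control cross-layer interactions.

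First, I would reduce to the planar case by induction on the Euler genus of $\Sigma$. Standard topological surgery along a shortest noncontractible cycle (or short separating curve) replaces $G$ by a graph embedded in a surface of smaller genus, with a short new boundary whose vertices can be precolored from the inductive step; because the boundary has bounded length, the precoloring interacts with only a bounded-size portion of the argument.

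Next, on the plane with precolored boundary, I would do BFS from the precolored set to obtain layers $L_0, L_1, L_2, \ldots$, so that edges lie within a single layer or between consecutive layers. The coloring is built layer by layer: given a valid partial $L$-coloring of $L_0\cup\cdots\cup L_{i-1}$ with bounded clustering, one extends it to $L_i$ while keeping the clustering bounded by a function of $t$ and the genus. The key ingredient is that in a plane graph, $G[L_i]$ has a path-like structure along the boundary of the corresponding BFS-ball; thus, if a vertex $v\in L_i$ had many neighbors in $L_{i-1}$, those neighbors would sit along a long path in $L_{i-1}$, and together with a vertex $u\in L_{i-2}$ adjacent to a long subpath (forced by the nested planar BFS-ball boundaries) we would produce a copy of $P''_t$. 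This yields the structural analogue, for $P''_t$-freeness, of a bounded-degree assumption.

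The main obstacle will be turning the global $P''_t$-free condition into a workable local constraint on the layered structure. Unlike a bounded degree assumption, $P''_t$-freeness only forbids a very specific cross-layer configuration, and one has to describe the path-like structure of each BFS layer (induced from the cyclic order on the previous ball's boundary) and identify exactly which combinations of cross-layer edges are forced into a $P''_t$; Observation~\ref{obs-crucial} then guarantees that the two apex vertices of such a $P''_t$ must receive the same color in any low-clustering 3-coloring, which supplies the contradiction that drives the extension argument. Once this structural core is in place, the list coloring aspect should follow along the lines of~\cite{weak}: with three colors per list and the extension lemma in hand, a routine induction on layers yields the desired bounded-clustering $L$-coloring.
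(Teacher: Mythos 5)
Your proposal has a genuine gap at its structural core. The claim that a vertex $v\in L_i$ with many neighbors in the previous BFS layer $L_{i-1}$ forces a copy of $P''_t$ is false: $P''_t$ requires a $t$-vertex \emph{path} together with \emph{two} vertices adjacent to every vertex of that path, whereas the neighbors of $v$ in $L_{i-1}$ need not be pairwise adjacent at all (they can be an independent set spread along the layer), and nothing in a planar BFS layering forces a single vertex of $L_{i-2}$ to be adjacent to a long subpath of them. Indeed, $P''_t$-free planar graphs can have vertices of unbounded degree and unbounded ``back-degree'' into a previous layer, so $P''_t$-freeness does not translate into the bounded-degree-type local constraint your layer-by-layer extension needs. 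This matters because the layer-by-layer strategy with $3$-element lists is precisely what is known to work only under a genuine degree bound (Esperet--Joret, and its list version in~\cite{weak}); without it, the ``routine induction on layers'' step has no mechanism to avoid merging clusters across a high-degree vertex. The paper does not bound degrees at all: it classifies vertices as small or big, removes a family of \emph{sparsifiers} so that the remaining graph is hereditarily $(3-\varepsilon,O(g))$-sparse (Lemmas~\ref{lemma-nonewspars}--\ref{lemma-spaspar}), and then applies the island machinery (Lemma~\ref{lemma-islands}, Corollary~\ref{cor-colin}); $P''_t$-freeness enters only at two specific points (bounding the length of paths of degree-four vertices with two common big neighbors in the discharging, and Lemma~\ref{lemma-stacks} on rooted planar $3$-trees), not as a surrogate degree bound.

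A second gap is in your reduction to the plane. Cutting along a shortest non-contractible cycle and precoloring its vertices does not work here: that cycle can be arbitrarily long, so you would be precoloring an unbounded set, and clustered coloring does not tolerate precoloring even of bounded sets ``for free'' --- Observation~\ref{obs-crucial} shows that two precolored vertices can already force large clusters, which is exactly why the paper has to formulate and prove the stronger precoloring version (Theorem~\ref{thm-main}) and spend Sections~\ref{sec-stacks}--\ref{sec-general} on precoloring extension through separating triangles (via $P''_t$-free rooted planar $3$-trees) and through non-contractible \emph{triangles} only, which have bounded size. Your sentence asserting that the boundary precoloring ``interacts with only a bounded-size portion of the argument'' is the assertion that fails in the clustered setting and that the paper explicitly identifies as the main complication relative to~\cite{weak}.
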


Let us note a variant of Observation~\ref{obs-crucial} that shows that forbidding $P''_t$ is actually necessary
in this setting.

\begin{observation}\label{obs-crucialchoos}
Let $\gamma$ be a positive integer and let $t=9(2\gamma-1)(\gamma+1)$.  The graph $P''_t$ is not 3-choosable with clustering at most $\gamma$.
\end{observation}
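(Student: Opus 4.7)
The plan is to exhibit a 3-list-assignment $L$ for $P''_t$ under which every $L$-coloring has a monochromatic component of size at least $\gamma+1$. I would arrange matters so that the counting argument of Observation~\ref{obs-crucial} can be applied on a suitably chosen subpath once the colors of the two degree-$t$ vertices $u$ and $v$ have been revealed.

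Concretely, set $L(u)=\{1,2,3\}$ and $L(v)=\{4,5,6\}$, so that automatically $\varphi(u)\ne\varphi(v)$ for any $L$-coloring $\varphi$. Writing $P=P''_t-\{u,v\}$, I would partition $V(P)$ into nine consecutive blocks $P_{a,b}$ of equal length $(2\gamma-1)(\gamma+1)$, indexed by the pairs $(a,b)\in\{1,2,3\}\times\{4,5,6\}$, and assign every vertex of $P_{a,b}$ the list $\{a,b,7\}$, where $7$ is a fresh color common to all nine blocks. This is a valid 3-list-assignment.

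For any $L$-coloring $\varphi$ that is to have clustering at most $\gamma$, write $a^*=\varphi(u)$ and $b^*=\varphi(v)$. Every vertex of $P$ colored $a^*$ is adjacent to $u$, and so lies in the same monochromatic component as $u$; hence at most $\gamma-1$ vertices of $P$ are colored $a^*$, and symmetrically at most $\gamma-1$ are colored $b^*$. Within the block $P_{a^*,b^*}$, whose vertices are colored from $\{a^*,b^*,7\}$, this forces at least $(2\gamma-1)(\gamma+1)-(2\gamma-2)=(2\gamma-1)\gamma+1$ vertices to receive color $7$. These lie in at most $2\gamma-1$ maximal intervals of consecutive color-$7$ vertices (separated by the at most $2\gamma-2$ vertices of the block colored $a^*$ or $b^*$), so by pigeonhole one such interval contains at least $\gamma+1$ vertices, giving a monochromatic path of length exceeding $\gamma$ and contradicting the clustering bound. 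This is essentially the counting of Observation~\ref{obs-crucial} transplanted inside the block $P_{a^*,b^*}$.

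There is no real obstacle here; the only subtlety is that the lists of path vertices must be fixed before $\varphi(u)$ and $\varphi(v)$ are known, which is exactly why I reserve one block per possible pair $(a,b)$ and pay the factor $9$ in the value of $t$.
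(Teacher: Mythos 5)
Your proposal is correct and is essentially the paper's own proof: the same list assignment with $L(u)=\{1,2,3\}$, $L(v)=\{4,5,6\}$, and nine blocks with lists $\{a,b,7\}$, one reserved for each possible pair $(\varphi(u),\varphi(v))$, after which the counting of Observation~\ref{obs-crucial} is applied inside the block $P_{\varphi(u),\varphi(v)}$. You merely spell out that counting explicitly rather than citing it, and the arithmetic checks out.
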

\begin{proof}
Let $u$ and $v$ be the two vertices of degree $t$, and let us subdivide the path of $P''_t$ into nine vertex-disjoint paths $P_{i,j}$ for $i\in\{1,2,3\}$ and $j\in\{4,5,6\}$
of length $(2\gamma-1)(\gamma+1)$.  Let $L(u)=\{1,2,3\}$, $L(v)=\{4,5,6\}$, and for $i\in L(u)$ and $j\in L(v)$, let us set the list of every vertex of $P_{i,j}$ to $\{i,j,7\}$.
For any $L$-coloring $\varphi$, the argument from the proof of Observation~\ref{obs-crucial} shows that $\varphi$ restricted to the subgraph induced by $\{u,v\}\cup V(P_{\varphi(u),\varphi(v)})$
has a cluster of size more than $\gamma$.
\end{proof}

In order to deal with non-contractible triangles in the proof, we need a version of Theorem~\ref{thm-main2}
that allows some of the vertices of the graph to be precolored.
For integers $p$, $c$, and $\gamma$, we say that a graph $G$ is \emph{$p$-precoloring-$c$-choosable with
clustering at most $\gamma$} if for every $c$-list-assignment $L$ for $G$ and every set $X\subseteq V(G)$ of size at most $p$, every $L$-coloring of $X$ extends to
an $L$-coloring of $G$ with clustering at most $\gamma$ (and \emph{$p$-precoloring-$c$-colorable with clustering at most $\gamma$} if this is the
case for the list assignment giving each vertex the same list of size $c$).  The \emph{clustered $p$-precoloring-choosability} (resp. \emph{clustered $p$-precoloring-chromatic number})
of a graph class $\GG$ is the smallest integer $c$ such that for some positive integer $\gamma$,
every graph in $\GG$ is $p$-precoloring-$c$-choosable (resp. $p$-precoloring-$c$-colorable) with clustering at most $\gamma$.
The \emph{clustered precoloring-choosability} of a graph class $\GG$
is the smallest integer $c$ such that for every integer $p$, the class $\GG$ has clustered $p$-precoloring-choosability at most $c$;
in other words, there exists a function $\gamma:\mathbb{N}\to\mathbb{N}$ such that for every integer $p$,
every graph from $\GG$ is $p$-precoloring-$c$-choosable with clustering at most $\gamma(p)$.

\begin{theorem}\label{thm-main}
For every surface $\Sigma$ and positive integer $t$, the class of $P''_t$-free graphs drawn on $\Sigma$
has clustered precoloring-choosability at most three.
\end{theorem}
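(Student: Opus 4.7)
The plan is to prove Theorem~\ref{thm-main} by induction on the Euler genus $g$ of $\Sigma$, with a secondary induction on $|V(G)|$ inside each genus. The precoloring formulation is designed precisely for this scheme: when we reduce to surfaces of smaller Euler genus by cutting along non-contractible curves, the vertices on the cut must be added to the precolored set, so the inductive statement needs the flexibility to extend arbitrarily large (but bounded in terms of $g$ and $p$) precolorings, with a correspondingly larger clustering bound $\gamma(p)$.

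The first phase is a reduction to the \emph{plane-like case}, meaning the case in which $G$ has no non-contractible triangle. Whenever $G$ contains such a triangle $T$, I would extend the given partial coloring of $X$ to one of $X\cup V(T)$ by picking any colors from the lists for the (at most three) vertices of $V(T)\setminus X$, and then cut $\Sigma$ along $T$. The cut produces one or two graphs drawn on surfaces of strictly smaller Euler genus; each is $P''_t$-free (being a subgraph of $G$) and carries a precolored set of size at most $p+3$. Applying the outer induction hypothesis to each piece and gluing gives an $L$-coloring of $G$ extending the original precoloring, with clustering bounded by some $\gamma'(p+3)$. Iterating, after at most $O(g)$ such cuts, we are in the plane-like case with a precolored set of size bounded in terms of $p$ and $g$.

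In the plane-like case, the approach follows the framework of \cite{weak}. There, for graphs of bounded maximum degree, the authors process vertices in a carefully chosen order (based on BFS layers around the precolored boundary) and maintain the invariant that every current monochromatic cluster has bounded size. The replacement for bounded maximum degree is $P''_t$-freeness used in the spirit of Observation~\ref{obs-crucial}: if a monochromatic cluster were to grow too large, one should be able to identify two vertices $u$ and $v$ together with a long path of their common neighbors, which would form a $P''_t$---a contradiction. This qualitative dichotomy, appropriately quantified and iterated, produces the required bound on clustering, and the list-coloring variant comes for free since the argument never exploits the global identity of the colors but only the pigeonhole effect of the list size three.

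The main obstacle I expect is making this plane-like argument quantitative. Bounded maximum degree gives a local bound: each vertex contributes only boundedly many new vertices to any cluster at each step, so cluster growth can be tracked one vertex at a time. $P''_t$-freeness is fundamentally non-local, constraining two potentially distant vertices through long paths of common neighbors; translating it into an inductive bound on cluster growth requires a carefully chosen invariant. My best guess is that one defines a vertex-ordering and a potential-function style invariant so that when a monochromatic cluster $C$ exceeds the target size, one can use the face-width control from the plane-like reduction together with planarity to realize ``two vertices attached to a long sub-path of $C$'' as an actual $P''_t$ subgraph of $G$; the design of the ordering and the argument that such a configuration is always extractable will be the technical heart of the proof.
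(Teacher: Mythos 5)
Your outer reduction (induction on Euler genus, cutting along a non-contractible triangle, adding its vertices to the precolored set, and gluing back) is essentially the paper's final step, up to minor bookkeeping (the precolored set grows by six, since each cut vertex appears twice, and the clustering doubles when the two sides are glued). But everything after that is a genuine gap: for the case with no non-contractible triangles you only offer a ``best guess'' strategy, and that strategy does not work as stated. The claimed mechanism --- that whenever a monochromatic cluster gets too large one can extract two vertices with a long path of common neighbors and hence a $P''_t$ --- is false as a general implication: a large cluster in a planar graph can be, say, a long induced path with no vertex adjacent to many of its vertices, so $P''_t$-freeness gives no direct contradiction. This is also not how \cite{weak} proceeds; the actual engine there (and here) is a global sparsity argument: one shows that after removing a maximal family of separated ``sparsifier'' configurations the graph is hereditarily $(3-\varepsilon)$-sparse (a discharging argument in which $P''_t$-freeness enters only locally, e.g.\ to bound fans of degree-four vertices over two common big neighbors), and then the island machinery (Lemma~\ref{lemma-islands}, Corollary~\ref{cor-colin}) produces the clustered list coloring. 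None of this is present or replaced by anything concrete in your proposal.

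A second missing piece is the treatment of contractible separating triangles, which your plan does not address at all (your cutting step only removes non-contractible ones, and the remaining graph on the surface can still have many non-facial contractible triangles). This is where the precoloring formulation really bites: precoloring a triangle with three distinct colors and extending into a rooted planar $3$-tree can force clusters of size linear in the depth, so one cannot just recurse naively into separating triangles. The paper needs a dedicated lemma (Lemma~\ref{lemma-stacks}) showing that $P''_t$-freeness tames this example, together with a pointer system to avoid overloading a single boundary vertex, the strengthened ``isolates a $3$-solitary set'' property threaded through Corollaries~\ref{cor-nocut} and \ref{cor-onlystacks}, and the potential $q_G$ in Lemma~\ref{lemma-ext} to make the induction over separating triangles terminate. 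Without identifying this obstruction and an analogue of these steps, the plane-like case of your proposal cannot be completed along the lines you sketch.
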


Let us summarize our results, combining Theorem~\ref{thm-main} and Observations~\ref{obs-crucial} and \ref{obs-crucialchoos}.

\begin{corollary}
For any subgraph-closed class $\GG$ of graphs drawn on a fixed surface, the following
claims are equivalent:
\begin{itemize}
\item $\GG$ has clustered precoloring-choosability at most three.
\item $\GG$ has clustered choosability at most three.
\item $\GG$ has clustered $2$-precoloring-chromatic number at most three.
\item There exists a positive integer $t$ such that $P''_t\not\in \GG$.
\end{itemize}
\end{corollary}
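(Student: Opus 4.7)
The plan is to close a cycle of implications among the four conditions, with Theorem~\ref{thm-main} doing all the heavy lifting. The implication from condition four to condition one is \emph{exactly} the content of Theorem~\ref{thm-main}, so there is nothing to do there. The two forward implications from condition one are essentially by specialization: $(1) \Rightarrow (2)$ follows by setting $p = 0$ in the definition of clustered precoloring-choosability, and $(1) \Rightarrow (3)$ follows by restricting to the constant 3-list-assignment $L(v) = \{1,2,3\}$ together with $p = 2$, which converts clustered precoloring-choosability at most three into clustered $2$-precoloring-chromatic number at most three.

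For the reverse direction I would argue by contrapositive, so suppose condition four fails, i.e., $P''_t \in \GG$ for every positive integer $t$. To rule out condition two, fix any candidate clustering bound $\gamma$; Observation~\ref{obs-crucialchoos} supplies an explicit integer $t = 9(2\gamma-1)(\gamma+1)$ for which $P''_t$ is not $3$-choosable with clustering at most $\gamma$, and since $P''_t \in \GG$ by assumption, no $\gamma$ can witness clustered choosability at most three for $\GG$. To rule out condition three I use Observation~\ref{obs-crucial} in exactly the same way: for the integer $t = (2\gamma-1)(\gamma+1)$, the two high-degree vertices $u$ and $v$ of $P''_t$ form a set of size two whose precoloring by distinct colors admits no $3$-coloring extension of clustering at most $\gamma$, and this precoloring is allowed because $p = 2$ in condition three.

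The main, and really only, obstacle is verifying that nothing in conditions two or three evades these two observations; once one notices that the observations were engineered precisely to be counterexamples at all $\gamma$, the argument is mechanical. Subgraph-closedness of $\GG$ is used only to the trivial extent that $P''_t$ itself (not a larger host graph containing it) is the witness, so no actual subgraph operation is needed. The resulting proof of the corollary will therefore be short: a paragraph recording $(4) \Rightarrow (1)$ by citation, two lines for the specializations $(1) \Rightarrow (2)$ and $(1) \Rightarrow (3)$, and two short paragraphs applying the observations to obtain $(2) \Rightarrow (4)$ and $(3) \Rightarrow (4)$.
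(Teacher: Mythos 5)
Your proposal is correct and takes essentially the same route as the paper, which likewise just combines Theorem~\ref{thm-main} (for the implication from the last condition to the first) with the two trivial specializations and Observations~\ref{obs-crucial} and \ref{obs-crucialchoos} for the contrapositives. One small correction: subgraph-closedness of $\GG$ is genuinely used in the implication from the fourth condition to the first---it is what lets you pass from $P''_t\not\in\GG$ to the statement that every graph in $\GG$ is $P''_t$-free, so that Theorem~\ref{thm-main} applies---rather than being needed only in the directions using the observations, where indeed $P''_t$ itself is the witness.
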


Clustered chromatic number at most three does not have a similar nice characterization in graphs on surfaces---for every $t$,
the graph $P''_t$ is properly 3-colorable for every $t$, and yet using Observation~\ref{obs-crucial} we can combine multiple
copies of $P''_t$ into a planar graph without a 3-coloring with small clustering in many different ways.
Let us also note that clustered $1$-precoloring-chromatic number is equal to clustered chromatic number by symmetry among the colors.

Finally, let us remark that Theorem~\ref{thm-main1} cannot be generalized to the setting of graphs with forbidden minors:
Consider a large triangulated grid together with a universal vertex; this graph only has one vertex of large
degree and does not contain $K_6$ as a minor.  Consider any 3-coloring of this graph.  By symmetry, we can assume that
the universal vertex $u$ has color $3$.  If the cluster of $u$ is small, then there is a large triangulated subgrid on which
only the colors $1$ and $2$ are used, and Hex lemma implies that there is a long monochromatic path in this subgrid.

The rest of the paper is devoted to the proof of Theorem~\ref{thm-main}.
In Section~\ref{sec-islands}, we recall the notion of islands and a key result on their existence in sparse
graphs, noting that graphs on surfaces are just very slightly too dense for this result to imply clustered 3-choosability.
In~\cite{weak}, we introduced the notion of sparsifiers, a variant of reducible configurations designed to deal with this issue.
In Section~\ref{sec-sparsifiers}, we introduce the particular sparsifiers needed to prove Theorem~\ref{thm-main},
and in Section~\ref{sec-disch} we give a discharging argument showing that sparsifier-free graphs are sparse
enough to be amenable to the method from Section~\ref{sec-islands}.  This is enough to prove Theorem~\ref{thm-main}
for graphs without non-facial triangles.  In Sections~\ref{sec-stacks} and \ref{sec-extend}, we give a precoloring extension argument
used to deal with non-facial but contractible triangles (the argument from this section is again closely related
to the one given in~\cite{weak} to deal with the same issue).  The proof is finished by another precoloring extension argument
dealing with the non-contractible triangles in Section~\ref{sec-general}.

Let us remark that although the main ideas of the proof come from~\cite{weak}, adapting them to our setting was not
completely straightforward.  Beyond the obvious modifications (more sparsifiers and a slightly more difficult
discharging argument), a major source of complications comes a bit surprisingly from the very last part of the proof dealing with non-contractible
triangles.  In~\cite{weak}, rather than considering the clustered coloring and bounding the maximum degree, we considered
a related notion of the weak diameter coloring (which coincides with clustered coloring on graphs with bounded maximum degree).
An advantage of this notion is that fixing a coloring of a bounded number of vertices does not affect the
weak diameter choosability, and consequently we got the final precoloring extension argument for free.
This is not the case for clustered coloring in general, as can be seen e.g. in Observation~\ref{obs-crucial},
forcing us to deal with the clustered precoloring-choosability rather than just with clustered choosability.
This in turn substantially complicates the rest of the argument.

\section{Islands}\label{sec-islands}

The starting point of our proof is a standard island argument.
A non-empty set $I\subseteq V(G)$ is a \emph{$c$-island} if every vertex in $I$ has less than $c$ neighbors outside of $I$.
For real numbers $a$ and $b$, we say that a graph $G$ is \emph{$(a,b)$-sparse} if $|E(G)|\le a|V(G)|+b$,
and \emph{hereditarily $(a,b)$-sparse} if every induced subgraph of $G$ is $(a,b)$-sparse.
The following claim is proved in~\cite{islands} (even if one assumes only the existence of strongly sublinear separators
rather than drawing on a fixed surface).
\begin{lemma}\label{lemma-islands}
For all integers $b$ and $c\ge 1$, for any real number $\varepsilon>0$,
and for every surface $\Sigma$, there exists a positive integer $\sigma$ such that the following claim holds:
Every $(c-\varepsilon,b)$-sparse graph $G$ drawn on $\Sigma$ contains
at least $|V(G)|/\sigma$ pairwise disjoint $c$-islands of size at most $\sigma$.
\end{lemma}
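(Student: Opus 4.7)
The plan is to derive Lemma~\ref{lemma-islands} from the strongly sublinear vertex separator property of graphs drawn on a fixed surface, combined with a counting argument that exploits the $\varepsilon$-slack in the sparsity hypothesis to bound the number of ``bad'' pieces of the resulting decomposition.

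First I would invoke the separator theorem of Gilbert--Hutchinson--Tarjan (extended by Djidjev to arbitrary surfaces) in its weighted form: given a nonnegative weighting of an $n$-vertex graph drawn on $\Sigma$, there is a balanced vertex separator of total weight at most $K_\Sigma\sqrt{W}$, where $W$ is the total weight. Weighting each vertex $v$ by $1+\deg_G(v)$ (so that the total weight is linear in $|V(G)|$ by the sparsity of $G$), and applying the separator theorem recursively, halting on any part of weight at most a parameter $w_0$, I obtain a set $S\subseteq V(G)$ of separator vertices and a partition of $V(G)\setminus S$ into pieces $P_1,\dots,P_m$ each of size at most $w_0$. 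The standard solution of the recurrence (via Cauchy--Schwarz applied at each level of the recursion tree) gives $\sum_{s\in S}(1+\deg_G(s))=O_\Sigma(|V(G)|/\sqrt{w_0})$. Since the $P_i$ are components of $G-S$, no edge of $G$ joins distinct pieces.

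Next I classify: $P_i$ is a $c$-island precisely when every vertex of $P_i$ has fewer than $c$ neighbors in $S$, and is \emph{bad} otherwise. Letting $B\subseteq V(G)\setminus S$ be the set of vertices with at least $c$ neighbors in $S$, the number of bad pieces is at most $|B|$. A double count of the edges between $B$ and $S$ yields $c|B|\le\sum_{s\in S}\deg_G(s)=O_\Sigma(|V(G)|/\sqrt{w_0})$, so $|B|=O_\Sigma(|V(G)|/(c\sqrt{w_0}))$. Choosing $w_0$ as a constant depending on $\Sigma$, $c$, $b$, $\varepsilon$ (picked so that $|S|$ and $|B|$ are comfortably smaller than the main term $m\ge(|V(G)|-|S|)/w_0$), and then setting $\sigma$ to be any integer making the inequality $m-|B|\ge|V(G)|/\sigma$ hold, yields at least $|V(G)|/\sigma$ good pieces, each a $c$-island of size at most $\sigma$.

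The main obstacle is arranging the quantitative balance in the previous paragraph: because the bound on $|B|$ scales only as $1/\sqrt{w_0}$ while $m$ scales as $1/w_0$, the parameter $w_0$ cannot be taken arbitrarily large, and its selection must simultaneously respect $\Sigma$, $c$, $b$, and $\varepsilon$. It is here that the $\varepsilon$-slack is essential: without it, the best available estimate would be $|E_G(V(G)\setminus S,S)|\le|E(G)|\le c|V(G)|+b$, which degrades the bound to $|B|\le|V(G)|+O(1)$ and renders the argument vacuous. The weighted separator applied to degree-weighted vertices, combined with the strict inequality $|E(G)|<c|V(G)|$, is what actually makes the count on $|B|$ genuinely sublinear and allows the conclusion to be extracted.
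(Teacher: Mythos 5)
Note first that the paper itself does not prove this lemma; it quotes it from the islands paper of Dvo\v{r}\'ak and Norin, so your argument has to stand on its own merits, and it has a genuine gap --- in fact it would prove too much. The only point where you use the hypothesis that $G$ is $(c-\varepsilon,b)$-sparse is to say that the total weight $\sum_v(1+\deg_G v)$ is linear in $|V(G)|$; but that is true for \emph{every} graph on $\Sigma$, with or without the $\varepsilon$-slack. If the rest of the argument were sound, it would therefore apply verbatim to a $k\times k$ piece of the triangular grid (planar and of maximum degree $6$, so even your weighted-separator step is harmless there) and produce $\Omega(k^2)$ disjoint bounded-size $3$-islands. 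But that grid has no $3$-island consisting of interior (degree-$6$) vertices: a lexicographically extreme vertex of such a set has at least three lattice neighbours outside the set, so it has at least $3$ neighbours outside the island. Hence every $3$-island meets the outer boundary and there are only $O(k)$ disjoint ones. So the argument cannot be repaired by adjusting constants, and the place it breaks is exactly the balance you flag but never resolve: the number of pieces is about $|V(G)|/w_0$, while your bound on the bad vertices is of order $|V(G)|/(c\sqrt{w_0})$ with a constant coming from the separator recursion on $\Sigma$; for the good pieces to outnumber the bad ones you would need $\sqrt{w_0}$ smaller than $c$ divided by that constant, and nothing makes the recursion constant smaller than $c$ (for recursive decompositions of graphs on surfaces it is far above $3$), so no admissible $w_0\ge 1$ exists. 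The $\varepsilon$-slack, which you correctly identify as indispensable for the statement, never actually enters your estimates, so it cannot rescue this final step.

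There is a second, independent problem: the weighted separator theorem you invoke --- a balanced separator whose \emph{own} total weight is $O_\Sigma(\sqrt{W})$ for the weights $1+\deg_G(v)$ --- is false once degrees are unbounded. Already for the planar star $K_{1,n}$, any separation into parts of weight at most $2W/3$ must either contain the centre (weight $n+1$) or a constant fraction of the leaves, so the cheapest balanced separator has weight $\Theta(W)$, not $O(\sqrt{W})$. Separator theorems for surfaces control the size of the separator and the weight of the parts, not the weight of the separator, so the recursion output $\sum_{s\in S}(1+\deg_G(s))=O_\Sigma(|V(G)|/\sqrt{w_0})$ is not available, and with it the bound $|B|=O_\Sigma(|V(G)|/(c\sqrt{w_0}))$ collapses. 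Any correct proof must make the density deficit $\varepsilon$ do real quantitative work (as the argument in the cited paper does), rather than using it only to ensure that the total weight is linear.
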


By a standard argument, the presence of small $c$-islands can be used to obtain clustered colorings.
We are going to need the following technical version of the statement.  For a graph $G$ and an integer $c$,
a set $Y\subseteq V(G)$ is \emph{$c$-solitary} if each vertex $v\in V(G)\setminus Y$ has less than $c$ neighbors in $Y$.
A coloring $\varphi$ \emph{isolates} a set $Y$ if $\varphi(u)\neq\varphi(v)$ holds for every edge $uv$ such that $u\in Y$ and $v\not\in Y$;
in other words, if every cluster intersecting $Y$ is contained in $Y$.

\begin{corollary}\label{cor-colin}
For any integer $c\ge 1$, any real number $\varepsilon>0$,
and every surface $\Sigma$, there exists a function $\gamma_{\ref{cor-colin}}:\mathbb{N}^2\to\mathbb{N}$ such that the following claim holds.
Let $b$ be a real number, let $G$ be graph drawn on $\Sigma$, let $L$ be a $c$-list-assignment
for $G$, and let $X$ be a set of vertices of $G$.
If $G$ is hereditarily $(c-\varepsilon, b)$-sparse, then every $L$-coloring $\psi$ of $X$ extends
to an $L$-coloring $\varphi$ of $G$ with clustering at most $\gamma_{\ref{cor-colin}}(b,|X|)$
such that if $X$ is $c$-solitary, then $\varphi$ isolates $X$.
\end{corollary}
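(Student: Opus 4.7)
The plan is to extend $\psi$ to an $L$-coloring of $G$ by induction on $|V(G)|$, peeling off small $c$-islands disjoint from $X$ via Lemma~\ref{lemma-islands}. Fix $\Sigma$, $c\ge 1$, and $\varepsilon>0$, and let $\sigma=\sigma(b)$ be the constant provided by Lemma~\ref{lemma-islands} for these parameters. I set $\gamma_{\ref{cor-colin}}(b,p):=2\sigma(p+1)$.

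For the base case $|V(G)|\le\gamma_{\ref{cor-colin}}(b,|X|)$, I extend $\psi$ by visiting each $v\in V(G)\setminus X$: if $X$ is $c$-solitary, then $v$ has fewer than $c$ neighbors in $X$, so I pick $\varphi(v)\in L(v)$ avoiding the $\psi$-colors of those neighbors (possible as $|L(v)|=c$); otherwise any $\varphi(v)\in L(v)$ works. Every cluster lies in $V(G)$, hence has size at most $|V(G)|\le\gamma_{\ref{cor-colin}}(b,|X|)$, and in the $c$-solitary case no edge from $X$ to $V(G)\setminus X$ is monochromatic, so $\varphi$ isolates $X$.

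For the inductive step, suppose $|V(G)|>2\sigma(|X|+1)$. Applying Lemma~\ref{lemma-islands} to $G$ yields at least $|V(G)|/\sigma>2(|X|+1)$ pairwise disjoint $c$-islands of size at most $\sigma$; since they are pairwise disjoint, at most $|X|$ of them meet $X$, so some $c$-island $I$ is disjoint from $X$. The induced subgraph $G-I$ is drawn on $\Sigma$, is hereditarily $(c-\varepsilon,b)$-sparse, and $X$ remains $c$-solitary in it if it was in $G$; so by induction, $\psi$ extends to an $L$-coloring $\varphi'$ of $G-I$ with clustering at most $\gamma_{\ref{cor-colin}}(b,|X|)$ that isolates $X$ when appropriate. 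I then extend $\varphi'$ to $I$ by coloring each $v\in I$ (in any order within $I$) with some $\varphi(v)\in L(v)$ distinct from the colors of its fewer-than-$c$ neighbors in $V(G)\setminus I$, all of which are already colored by $\varphi'$; since $|L(v)|=c$, this is always possible.

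The key observation is that this extension leaves no monochromatic edge crossing the boundary of $I$, so every cluster of $\varphi$ lies entirely in $I$ (size at most $\sigma$) or entirely in $V(G)\setminus I$ (inheriting a size bound of $\gamma_{\ref{cor-colin}}(b,|X|)\ge\sigma$ from $\varphi'$), preserving the clustering bound. For the isolation claim, any edge $uv$ with $u\in X$ and $v\in V(G)\setminus X$ is bichromatic: if $v\in I$ this is forced by the avoidance rule when coloring $v$, and otherwise by the isolating property of $\varphi'$. I do not expect a deep obstacle here; the only real point to watch is that the threshold in the base case is chosen exactly to match the value that the inductive step preserves, so that $\gamma_{\ref{cor-colin}}$ depends only on $b$ and $|X|$ as required. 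The main adaptation beyond the standard island argument is the bookkeeping that simultaneously forces the isolation of the precolored set $X$ via the greedy choice at each island and base-case vertex.
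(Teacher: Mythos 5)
Your proposal is correct and follows essentially the same argument as the paper: induct on $|V(G)|$, use Lemma~\ref{lemma-islands} to find a small $c$-island disjoint from $X$, color it greedily avoiding the colors of its fewer-than-$c$ outside neighbors (which both preserves the clustering bound and forces isolation of $X$), and handle the small base case by a direct greedy extension (the paper does this by treating $V(G)\setminus X$ as one island, which is the same idea). The only differences are cosmetic, e.g.\ your threshold $2\sigma(|X|+1)$ versus the paper's $\sigma\cdot|X|$.
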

\begin{proof}
For any real number $b$, let $\sigma(b)$ be the constant from Lemma~\ref{lemma-islands} for $b$, $c$, $\varepsilon$, and $\Sigma$,
and let $\gamma_{\ref{cor-colin}}(b,p)=\sigma(b)\cdot p$.
We prove the claim by induction on the number of vertices of $G$.  If $G$ has more than $\sigma(b)\cdot |X|$ vertices,
then by Lemma~\ref{lemma-islands}, $G$ contains more than $|X|$ $c$-islands of size at most $\sigma(b)$.
Hence $G$ contains a $c$-island $I$ of size at most $\sigma(b)$ disjoint from $X$.
By the induction hypothesis, $\psi$ extends to an $L$-coloring $\varphi_0$ of $G-I$
with clustering at most $\gamma_{\ref{cor-colin}}(b,|X|)$ such that if $X$ is $c$-solitary, then $\varphi_0$ isolates $X$.
We extend $\varphi_0$ to an $L$-coloring $\varphi$ of $G$ by choosing for each vertex $v\in I$ an arbitrary color from $L(v)$ different from the colors of
its (less than $c$) neighbors outside of $I$.  This ensures that all newly arising clusters are contained in $I$,
and thus they have size at most $\sigma(b)$. Moreover, if $\varphi_0$ isolates $X$, then so does $\varphi$.

Suppose now that $|V(G)|\le \sigma(b)\cdot |X|=\gamma_{\ref{cor-colin}}(b,|X|)$.
If $X$ is $c$-solitary and $X\neq V(G)$, then we proceed in the same way for the
$c$-island $I=V(G)\setminus X$.  Otherwise, we can simply extend $\psi$ to an $L$-coloring of $G$ arbitrarily.
\end{proof}

\section{Sparsifiers}\label{sec-sparsifiers}

The generalized Euler's formula implies that a graph $G$ drawn on a surface of Euler genus $g$ is hereditarily $(3,O(g))$-sparse.
This is unfortunately just too dense for us to be able to apply Corollary~\ref{cor-colin} with $c=3$, since there
we need the graph to be hereditarily $(3-\varepsilon,O(g))$-sparse for fixed $\varepsilon>0$.  To circumvent
this issue, let us introduce a notion of \emph{sparsifiers}---subgraphs of $G$ with the property that
\begin{itemize}
\item deleting a maximal system of disjoint non-adjacent sparsifier results in a hereditarily $(3-\varepsilon,O(g))$-sparse graph $G'$, and
\item a coloring of $G'$ with clustering at most $\gamma$ extends to a coloring of $G$ (from the given lists) with clustering $O(\gamma)$.
\end{itemize}
An important issue is how to ensure hereditarity; it is fairly easy to come up with a set of sparsifiers whose deletion results in a $(3-\varepsilon,O(g))$-sparse graph,
but how to ensure that all induced subgraphs $H\subseteq G'$ are also sparse?  To deal with this issue, we find sparsifiers with the property
that all incident faces have length three, and restrict ourselves for now to graphs without separating triangles.
This way, any sparsifier $S$ in the induced subgraph $H$ would also be a sparsifier in $G'$:  The vertices of $S$ are incident only with triangular faces in $H$,
and since $G$ does not contain separating triangles, these triangles also bound the same faces in $G'$.  Consequently, vertices of $S$ have the same neighborhoods
in $G'$ as in $H$, and as will be clear when we define sparsifiers formally, this is enough to ensure that $S$ is a sparsifier in $G'$ as well.
Let us now present the ideas precisely, including the additional complications arising from dealing with precolored vertices.

For an integer $D$, a graph $G$ drawn on a surface, and a set $X\subseteq V(G)$,
a vertex $v\in V(G)$ is \emph{$(D,X)$-small} if $v\not\in X$, $v$ has degree at most $D$,
and all incident faces are $2$-cell and have length exactly three.  We say that $v$ is \emph{$(D,X)$-big} otherwise (i.e., $v\in X$, or $\deg v>D$,
or $v$ is incident with a non-triangular face).
Note that in discharging arguments, it is more usual to define a big vertex to be simply a vertex of sufficiently
large degree.  The reason for our more complicated definition is to make this notion hereditary: Subject to
a technical constraint on separating triangles, if a vertex is $(D,X)$-big in a graph, it is also $(D,X)$-big in all induced subgraphs that contain it.
We say that a triangle in $G$ is \emph{facial} if it bounds a $2$-cell face, and \emph{$X$-external} if it contains a vertex
that does not belong to $X$.  Let us remark that all graphs considered in this paper are simple, without loops or parallel edges.

\begin{observation}\label{obs-sg}
Suppose $G$ is a graph drawn on a surface and let $X$ be a set of vertices of $G$ such that every $X$-external triangle in $G$ is facial.
For any integer $D$, any subgraph $H$ of $G$ with $|V(H)|>3$, and a vertex $v\in V(H)$, if $v$ is $(D,X\cap V(H))$-small in $H$, then
$v$ is also $(D,X)$-small in $G$ and $\deg_H v=\deg_G v$.
\end{observation}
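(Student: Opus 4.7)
The plan is to reduce the observation to a single central claim: for each face $f_i$ of $H$ incident with $v$---by hypothesis a $2$-cell triangle bounded by $T_i = vu_i u_{i+1}$, where $u_1,\ldots,u_k$ enumerate the neighbors of $v$ in $H$ in cyclic order and $k = \deg_H v \le D$---the open region $f_i \subseteq \Sigma$ contains no vertex or edge of $G$. Granting the claim, every edge of $G$ incident with $v$ must leave $v$ into some sector $f_i$ and therefore already belong to $H$, yielding $\deg_G v = \deg_H v \le D$; moreover the faces of $G$ at $v$ coincide with $f_1,\ldots,f_k$, each a $2$-cell triangle. Combined with the immediate observation $v \in V(H) \setminus X$ (from $v \notin X \cap V(H)$), this shows that $v$ is $(D,X)$-small in $G$.

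To prove the claim, I would first invoke the hypothesis on $G$: because $T_i$ is a triangle of $G$ containing $v \notin X$, it is $X$-external and hence facial in $G$, bounding a $2$-cell face $f''_i$ whose facial walk is $T_i$. Since $T_i$ is contractible in $\Sigma$ (it already bounds the disk $f_i$), it separates $\Sigma$ into side $A = f_i$ and side $B = \Sigma \setminus \overline{f_i}$. The goal is to force $f''_i$ to lie on side $A$: then $f''_i \subseteq f_i$ and both are $2$-cell regions bounded by $T_i$ on the same side of $T_i$, so $f''_i = f_i$ and the claim follows.

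The main obstacle is ruling out the possibility that $f''_i$ lies on side $B$. Here I would use a short topological argument: in that case $\overline{f_i} \cup \overline{f''_i}$ would be an embedded $2$-sphere inside the connected $2$-manifold $\Sigma$, forcing $\Sigma$ itself to be a sphere and $f''_i$ to coincide with side $B$ as an open set; in particular side $B$ would then contain no vertex of $G$. This is where the hypothesis $|V(H)| > 3$ becomes essential: it supplies some $w \in V(H) \setminus \{v, u_i, u_{i+1}\}$, and since $w$ lies neither on $T_i$ nor inside the $H$-face $f_i$, it must lie in side $B$---a contradiction that closes the side-$B$ case and establishes the claim.
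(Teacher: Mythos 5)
Your proposal is correct and follows essentially the same route as the paper's proof: each triangle bounding a face of $H$ incident with $v$ is $X$-external (since $v\notin X$), hence facial in $G$, and identifying that $2$-cell face of $G$ with the corresponding face of $H$ gives $\deg_G v=\deg_H v\le D$ and the $(D,X)$-smallness of $v$ in $G$. The only minor divergence is where $|V(H)|>3$ is used: you invoke it to kill the ``wrong side'' case via the two-disks-glued-along-$T_i$-form-a-sphere argument, whereas the paper uses it to ensure $\deg_H v\ge 3$ and hence that each such triangle bounds exactly one face of $H$, leaving the remaining topological identification implicit.
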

\begin{proof}
Since $v$ is $(D,X\cap V(H))$-small in $H$, it follows that $v\not\in X$ and all faces of $H$ incident with $v$ are $2$-cell and bounded by triangles.
Since $H$ is a simple graph and $|V(H)|>3$, this implies that $\deg_H v\ge 3$, and thus each of the triangles bounds exactly one face of $H$.
Since $v\not\in X$, the triangles containing $v$ are $X$-external, and thus by the assumptions, they are also facial in $G$.
We conclude that each face of $H$ incident with $v$ is also a face of $G$.  It follows that $\deg_G v=\deg_H v\le D$
and that $v$ is $(D,X)$-small in $G$.
\end{proof}

\begin{figure}
\begin{center}
\includegraphics{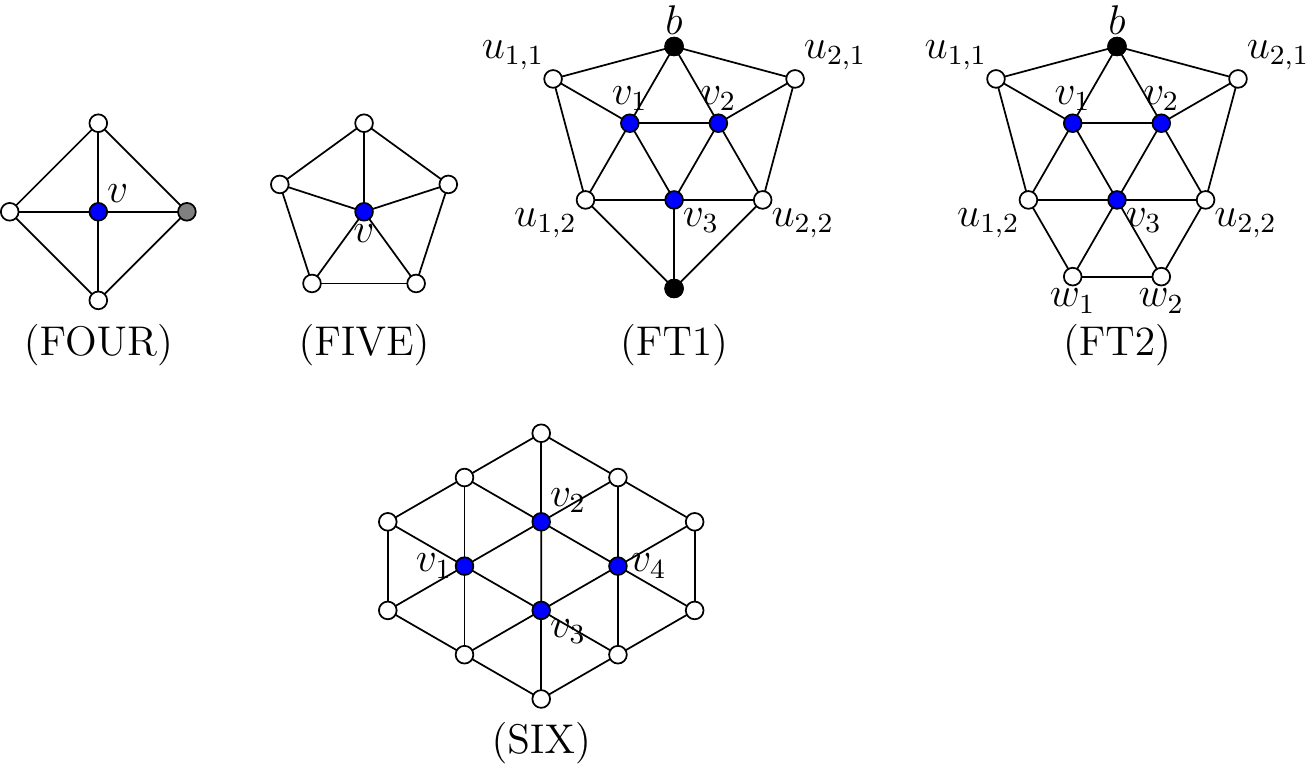}
\end{center}
\caption{The sparsifiers. White vertices are $(D,X)$-small, black are $(D,X)$-big, gray may be either.  The $(D,X)$-small vertices forming the sparsifier
are depicted in blue.}\label{fig-sparsifiers}
\end{figure}

For an integer $D\ge 6$, a graph $G$ drawn on a surface, and a set $X\subseteq V(G)$,
a set $S\subseteq V(G)$ is a \emph{$(D,X)$-sparsifier} if it consists of $(D,X)$-small vertices satisfying one of the following conditions
(see Figure~\ref{fig-sparsifiers}):
\begin{itemize}
\item[(FOUR)] $S=\{v\}$, where $v$ is a vertex of degree four with at most one $(D,X)$-big neighbor.
\item[(FIVE)] $S=\{v\}$, where $v$ is a vertex of degree five with no $(D,X)$-big neighbors.
\item[(FT1)] $S=\{v_1,v_2,v_3\}$, where $v_1v_2v_3$ is a facial triangle of vertices of degree five,
each with exactly one $(D,X)$-big neighbor, and such that $v_1$ and $v_2$ have a common $(D,X)$-big neighbor.
\item[(FT2)] $S=\{v_1,v_2,v_3\}$, where $v_1v_2v_3$ is a facial triangle, $v_1$ and $v_2$ are degree five vertices
with a common $(D,X)$-big neighbor and no other $(D,X)$-big neighbors and $v_3$ is a vertex of degree six with no $(D,X)$-big neighbor.
\item[(SIX)] $S=\{v_1,v_2,v_3,v_4\}$, where $v_1v_2v_3$ and $v_2v_3v_4$ are facial triangles sharing an edge and consisting
of vertices of degree six with no $(D,X)$-big neighbors, and $v_1$ is not adjacent to $v_4$.
\end{itemize}

Two sets $S_1,S_2\subseteq V(G)$ are \emph{separated} if they are vertex-disjoint and $G$ has no edge with one end in $S_1$
and the other end in $S_2$.  A graph $G$ is \emph{$(D,X)$-sparsifier-free} if it contains no $(D,X)$-sparsifiers.
Let us note the following important corollary of Observation~\ref{obs-sg}.

\begin{lemma}\label{lemma-nonewspars}
Let $D\ge 6$ be an integer.  Suppose $G$ is a graph drawn on a surface and $X$ is a set of vertices of $G$ such
that every $X$-external triangle in $G$ is facial.
Let $S_1$, \ldots, $S_m$ be a maximal system of pairwise separated $(D,X)$-sparsifiers in $G$.
Then every induced subgraph $H$ of $G-\bigcup_{i=1}^m S_i$ is $(D,X\cap V(H))$-sparsifier-free.
\end{lemma}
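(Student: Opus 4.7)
The plan is to argue by contradiction: suppose some induced subgraph $H$ of $G - \bigcup_{i=1}^m S_i$ contains a $(D, X \cap V(H))$-sparsifier $S$. I will produce a $(D,X)$-sparsifier $S' \subseteq S$ in $G$ that is separated from every $S_i$, contradicting the maximality of the chosen family. Since every sparsifier type involves a vertex of degree at least four, any $H$ containing a sparsifier satisfies $|V(H)| > 3$, so Observation~\ref{obs-sg} applies to each $v \in S$: it gives $\deg_G v = \deg_H v$ and shows that every face of $H$ incident with $v$ is also a face of $G$. In particular $N_G(v) = N_H(v) \subseteq V(H)$, so $S$ has no $G$-edges to $\bigcup_i S_i$ and is automatically separated from each $S_i$. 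Applying Observation~\ref{obs-sg} a second time to any $u \in N_G(v)$ that is $(D, X \cap V(H))$-small in $H$ yields that $u$ is $(D,X)$-small in $G$; contrapositively, any $(D,X)$-big-in-$G$ neighbor of $v$ is already $(D, X \cap V(H))$-big in $H$. Finally, any facial triangle of $H$ whose vertices lie in $S$ is $X$-external, because small-in-$H$ vertices are not in $X$, and hence is facial in $G$ by the hypothesis of the lemma.

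With these preparations, each sparsifier type transfers from $H$ to $G$. The cases (FOUR), (FIVE) and (SIX) are immediate: the relevant degrees are preserved, the ``at most one'' or ``no'' big-neighbor conditions only become easier when passing from $H$ to $G$, the triangles in (SIX) remain facial in $G$, and $v_1 v_4 \notin E(G)$ holds since $H$ is an induced subgraph. The real subtlety lies in (FT1) and (FT2), whose definitions require \emph{exactly} one $(D,X)$-big neighbor per $v_i$; passing from $H$ to $G$ can only decrease that count. If some $v_i$ has zero $(D,X)$-big neighbors in $G$, then $S' := \{v_i\}$ is itself a (FIVE)-sparsifier in $G$ and we are done. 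Otherwise every relevant $v_i$ still has exactly one $(D,X)$-big neighbor in $G$, and such a neighbor must coincide with the unique $(D, X \cap V(H))$-big-in-$H$ neighbor of $v_i$; in particular the common big neighbor shared by $v_1$ and $v_2$ in $H$ is also their common big neighbor in $G$, so $S$ is itself an (FT1)- or (FT2)-sparsifier of $G$.

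Either way we obtain a $(D,X)$-sparsifier $S' \subseteq S$ in $G$ that is separated from $\bigcup_i S_i$, contradicting maximality. I expect the only non-routine ingredient to be the bookkeeping for the (FT1) and (FT2) cases, where one must notice that losing a single big neighbor from a degree-five vertex produces a (FIVE)-sparsifier ``for free''; the remainder of the argument is driven almost mechanically by Observation~\ref{obs-sg} together with the $X$-external-triangle hypothesis.
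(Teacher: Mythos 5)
Your proposal is correct and follows essentially the same route as the paper: apply Observation~\ref{obs-sg} to conclude that vertices of $S$ are $(D,X)$-small in $G$ with unchanged neighborhoods (hence $S$ is separated from the $S_i$), that $(D,X)$-big neighbors in $G$ are already big in $H$, and then observe that $S$ or a subset of $S$ is a $(D,X)$-sparsifier in $G$, with the (FT1)/(FT2) fallback to a single-vertex (FIVE)-sparsifier when a big neighbor is lost — exactly the subset case the paper notes. Your write-up merely spells out this case analysis in a bit more detail than the paper does.
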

\begin{proof}
Suppose for a contradiction that $S$ is a $(D,X\cap V(H))$-sparsifier in $H$.  By the definition of sparsifiers,
vertices of $S$ are $(D,X\cap V(H))$-small in $H$, and $|V(H)|\ge 5$.  By Observation~\ref{obs-sg}, the vertices of $S$ are also $(D,X)$-small in $G$
and have the same neighborhood in $G$ as in $H$.  Consequently, $S$ is separated from $S_1$, \ldots, $S_m$.
Moreover, by Observation~\ref{obs-sg}, all $(D,X)$-big vertices in $V(G)\setminus S$ with a neighbor in $S$
are also $(D,X\cap V(H))$-big in $H$.  The inspection of the definition of sparsifiers shows that either
$S$ or a subset of $S$ is a $(D,X)$-sparsifier in $G$ (the subset case happens if $S$ satisfies (FT1) or (FT2)
and one of the neighboring vertices is $(D,X\cap V(H))$-big in $H$ but not $(D,X)$-big in $G$; then a vertex of $S$
of degree five forms a $(D,X)$-sparsifier in $G$ satisfying (FIVE)).
However, this contradicts the maximality of the system $S_1$, \ldots, $S_m$.
\end{proof}

The choice of the subgraphs forming the sparsifiers is motivated by the following coloring extension property.
\begin{lemma}\label{lemma-delspars}
Let $G$ be a graph drawn on a surface and $X$ a set of its vertices, let $D\ge 6$ be an integer,
let $L$ be a $3$-list-assignment for $G$, and let $S_1$, \ldots, $S_m$ be pairwise separated $(D,X)$-sparsifiers in $G$.
Then any $L$-coloring $\varphi_0$ of $G'=G-\bigcup_{i=1}^m S_i$ with clustering at most $k$ extends to
an $L$-coloring $\varphi$ of $G$ with clustering at most $(4D+1)k$.  Moreover, if $\varphi_0$ isolates $X$, then so does~$\varphi$.
\end{lemma}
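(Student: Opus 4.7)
The plan is to extend $\varphi_0$ by coloring each sparsifier $S_i$ independently from its lists, with the choices controlled by the sparsifier structure so that the resulting clusters remain small. For each $v\in S_i$ I aim to pick $\varphi(v)\in L(v)$ satisfying (a) $\varphi(v)\neq\varphi_0(u)$ for every $(D,X)$-big neighbor $u$ of $v$, and (b) at most one $(D,X)$-small outside neighbor $w$ of $v$ has $\varphi_0(w)=\varphi(v)$. Whenever possible, $\varphi$ is also made proper on $S_i$. These choices are enabled by the sparsifier definitions: each $v$ has at most one $(D,X)$-big outside neighbor in types (FOUR), (FT1), (FT2) and none in (FIVE), (SIX), so excluding its color leaves at least two colors in $L(v)$; and since $v$ has at most five outside neighbors in total, a pigeonhole among the remaining list colors yields (b). Internal properness is obtained from the $3$-choosability of $K_3$ (for (FT1), (FT2)) and of $K_4-e$ (for (SIX), where I color the non-adjacent pair $v_1,v_4$ identically and $v_2,v_3$ with distinct colors); in the rare subconfigurations where (a) forces the available lists down to two common colors and proper coloring becomes infeasible, I permit a single monochromatic edge inside $S_i$.

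Next I would bound the cluster size. Let $C$ be any cluster in $\varphi$ and decompose $C=C_S\sqcup C_0$ with $C_S\subseteq\bigcup_i S_i$ and $C_0\subseteq V(G')$. Since $\varphi$ agrees with $\varphi_0$ on $V(G')$, $C_0$ is a disjoint union of $\varphi_0$-clusters, each of size at most $k$. Property (a) prevents any $(D,X)$-big vertex of $C_0$ from being adjacent to a vertex of $C_S$, so the outside vertices used to attach $C_S$ to $C$ are $(D,X)$-small, of degree at most $D$. Property (b) together with internal properness implies that, in the bipartite quotient obtained by collapsing each $\varphi_0$-cluster of $C_0$ to a single vertex, the resulting connected graph on $|C_S|+t$ vertices has at most $|C_S|$ edges, forcing $t\le 1$. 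Hence $|C_0|\le k$, and since every $v\in C_S$ attaches to $C_0$ through a small outside vertex whose $G$-degree is at most $D$, we obtain $|C_S|\le kD$. Therefore $|C|\le|C_S|+|C_0|\le(D+1)k\le(4D+1)k$.

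For the isolation claim, if $\varphi_0$ isolates $X$ then every vertex of $X$ is $(D,X)$-big by definition, so (a), applied to $X$-neighbors of sparsifier vertices, ensures that no monochromatic edge of $\varphi$ crosses the boundary of $X$; hence $\varphi$ also isolates $X$.

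The main obstacle is to jointly guarantee (a), (b), and internal properness for the multi-vertex sparsifiers. The tightest case is (FT1) when all three triangle vertices share the same $(D,X)$-big neighbor and their lists coincide, so that (a) shrinks each list to two colors and proper list-coloring of $K_3$ may fail; an analogous subcase arises for (SIX). Handling these cases requires a detailed type-by-type analysis leveraging the $3$-choosability of $K_3$ and of $K_4-e$, and at worst allowing one monochromatic edge inside $S_i$, which is easily absorbed by the loose factor $4D+1$ in the final bound.
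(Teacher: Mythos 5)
Your overall architecture (color each $S_i$ separately, avoid colors of big neighbors, then bound clusters by counting attachments through small vertices of degree at most $D$) matches the paper, and your isolation argument is fine. But there is a genuine gap at the heart of the cluster bound. The paper's proof hinges on a condition you do not enforce: any two outside neighbors of a sparsifier that become connected by a monochromatic path \emph{through} the sparsifier must already be connected by a monochromatic path among the outside neighbors themselves. This is exactly what guarantees that no two distinct clusters of $\varphi_0$ are ever merged, so that every new cluster consists of a single old cluster $C_0$ plus at most $4D|C_0|$ sparsifier vertices. Your conditions (a) and (b) do prevent a merge through a \emph{single} sparsifier vertex, but the moment you ``permit a single monochromatic edge inside $S_i$'' (which you must, since proper list coloring of the triangle in (FT1)/(FT2) from the two-element residual lists can genuinely fail), the two endpoints of that edge may each attach, consistently with (b), to \emph{different} $\varphi_0$-clusters, merging them. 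Your quotient-counting step then breaks: with an intra-$C_S$ edge the connected quotient can have $|C_S|+1$ edges and $t=2$, and such merges can chain across many sparsifiers lying on a long monochromatic ``path of clusters'', so no bound of the form $f(D)\cdot k$ follows from (a), (b) and near-properness alone.

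The paper escapes this not by properness but by a finer notion of ``dangerous'' color (a repeated color among outside neighbors is harmless if those neighbors already lie in the same cluster) together with the specific adjacency structure of the configurations: e.g.\ in (FT1) and (FT2) the two small outside neighbors $u_{j,1},u_{j,2}$ of $v_j$ are adjacent, and $u_{1,2},u_{2,2}$ are adjacent to $v_3$, which is what lets one verify, case by case, that whenever a color must be repeated inside $S_i$ the potentially merged outside neighbors are forced into one cluster of $\varphi_0$ already. Your pigeonhole argument never uses these adjacencies, and without them the problematic subconfigurations you set aside are precisely the ones where the claimed clustering bound would fail; handling them is the substantive content of the lemma, not a loose factor absorbed by $4D+1$.
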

\begin{proof}
Consider any $i\in\{1,\ldots,m\}$, let $H_i$ be the subgraph of $G$ induced by the neighbors of vertices of $S_i$
in $V(G)\setminus S_i$, and let $T_i$ be the subgraph induced by the vertices in $S_i$ and their neighbors.
Note that $H_i\subseteq G'$, since the sparsifiers $S_1$, \ldots, $S_m$ are pairwise separated.
Below, we show that $\varphi_0$ can be extended to each sparsifier $S_i$ so that the following conditions hold:
\begin{itemize}
\item[(i)] If two vertices of $H_i$ are connected by a monochromatic path in $T_i$, then they are also connected
by a monochromatic path in $H_i$.
\item[(ii)] The color of each vertex of $S_i$ is different from the color of any adjacent $(D,X)$-big neighbor.
\end{itemize}
Let $\varphi$ be obtained from $\varphi_0$ by extending it to each of the sparsifiers separately so that the conditions
(i) and (ii) hold for each of them.  The condition (i) ensures that each cluster $C$ of $\varphi$ is either
contained in one of the sparsifiers (and thus has size at most $4$), or it is obtained from a cluster $C_0$ of $\varphi_0$
by adding parts of sparsifiers with neighbors in $C_0$ (i.e., no two previously existing clusters are merged).
The condition (ii) implies that each component of $G[C]-C_0$ has a neighbor in $C_0$ that is $(D,X)$-small,
and thus there are at most $D|C_0|$ such components.  Therefore, $|C|\le |C_0|+4D|C_0|\le (4D+1)k$.
Moreover, (ii) also ensures that if $\varphi_0$ isolates $X$, then so does $\varphi$.

It remains to show that $\varphi_0$ extends to an $L$-coloring of each $(D,X)$-sparsifier $S_i$ so that
the conditions (i) and (ii) hold.  
For a vertex $v\in S_i$, let us say that a color $c\in L(v)$ is \emph{dangerous} if either
$c$ appears on distinct neighbors of $v$ in $H_i$ that are not connected by a monochromatic path in $H_i$,
or $c$ appears on a $(D,X)$-big neighbor of $v$ in $H_i$.  Note that a dangerous color cannot be used to color $v$,
as otherwise (i) and (ii) would be violated.  If each vertex in $S_i$ is colored by a non-dangerous color, then (ii)
holds, but we still need to be careful in order to satisfy (i) in case that the same color is used on more than one vertex
of $S_i$.  Let us now consider each type of sparsifier separately.
\begin{itemize}
\item[(FOUR)] Let $S_i=\{v\}$.  Since $v$ has degree four and has at most one $(D,X)$-big neighbor, there
are at most two dangerous colors in $L(v)$, and coloring $v$ by a non-dangerous color from $L(v)$
ensures that both (i) and (ii) hold.
\item[(FIVE)] The argument in this case is identical, noticing that since $v$ has degree five and no $(D,X)$-big neighbors,
there are at most two dangerous colors in $L(v)$.
\item[(FT1)] Let $S_i=\{v_1,v_2,v_3\}$, where $v_1$ and $v_2$ have a common $(D,X)$-big neighbor $b$.
Note that there is only one dangerous color at $v_1$ and $v_2$, since for $j\in\{1,2\}$, the neighbors
$u_{j,1}$ and $u_{j,2}$ of $v_j$ outside of $S_i$ distinct from $b$ are adjacent,
whereas there can be up to two dangerous colors at $v_3$.  Choose the labels so that $u_{1,2}$ and $u_{2,2}$ are
also adjacent to $v_3$, see Figure~\ref{fig-sparsifiers}.  Let us first
try coloring $v_1$ and $v_2$ by non-dangerous colors $c_1\in L(v_1)$ and $c_2\in L(v_2)$ with $c_1\neq c_2$
and $v_3$ by a non-dangerous color $c_3\in L(v_3)$; this clearly ensures that (ii) holds.
If $c_3\not\in\{c_1,c_2\}$, then since we selected a non-dangerous color at each vertex, 
the coloring also satisfies (i).  Hence, by symmetry we can assume that $c_3=c_1$.
The condition (i) can only be violated if $\varphi_0(u_{2,2})=\varphi_0(u_{1,1})=c_1$ and $\varphi_0(u_{1,2})\neq c_1$
(if $\varphi_0(u_{1,2})=c_1$, then since $c_3=c_1$ is not dangerous at $v_3$, $u_{2,2}$ and $u_{1,2}$ are contained
in the same cluster of $H_i$, and so is the vertex $u_{1,1}$ adjacent to $u_{2,2}$).

However, since there are two non-dangerous colors at $v_1$ and $v_2$, there exists another choice of non-dangerous
colors $c'_1\in L(v_1)$ and $c'_2\in L(v_2)$ such that $c'_2\neq c'_1\neq c_1$.  We still color $v_3$ by $c_1$.
Then $v_1$ has color different from the colors of its neighbors in $S_i$.  Moreover, if $v_2$ and $v_3$ receive the same color,
then this color is $c_1\neq\varphi(u_{1,2})$.  We conclude that this coloring satisfies (i) and (ii).

\item[(FT2)] Let $S_i=\{v_1,v_2,v_3\}$, where $v_1$ and $v_2$ are degree five vertices with a common $(D,X)$-big neighbor $b$.
For $j\in\{1,2\}$, let $u_{j,1}$ and $u_{j,2}$ be the neighbors of $v_j$ outside of $S_i$ distinct from $b$,
where $u_{1,2}$ and $u_{2,2}$ are adjacent to $v_3$, and let $w_1$ and $w_2$ be the other two neighbors of $v_3$
outside of $S_i$, where $w_1$ is adjacent to $u_{1,2}$, see Figure~\ref{fig-sparsifiers}.  As in the previous
case, note that there is only one dangerous color at $v_1$ and $v_2$ and up to two dangerous colors at $v_3$.
Let us first try coloring $v_1$ and $v_2$ by non-dangerous colors $c_1\in L(v_1)$ and $c_2\in L(v_2)$ with $c_1\neq c_2$
and $v_3$ by a non-dangerous color $c_3\in L(v_3)$, so that (ii) holds.
If $c_3\not\in\{c_1,c_2\}$, then since we selected a non-dangerous color at each vertex, 
the coloring also satisfies (i).  Hence, by symmetry we can assume that $c_3=c_1$.
The condition (i) can only be violated if $\varphi_0(u_{1,1})=c_1$,
$c_1\in\{\varphi_0(w_1),\varphi_0(w_2),\varphi_0(u_{2,2})\}$, and $\varphi_0(u_{1,2})\neq c_1$
(if $\varphi_0(u_{1,2})=c_1$, then since $c_3=c_1$ is not dangerous at $v_3$, all neighbors of $v_3$ of color $c_1$ in $H_i$
are contained in the same cluster of $H_i$, and so is the neighbor $u_{1,1}$ of $u_{1,2}$).

Next, let us try another choice of non-dangerous colors $c'_1\in L(v_1)$ and $c'_2\in L(v_2)$ such that $c'_2\neq c'_1\neq c_1$,
while still coloring $v_3$ by $c_1$; by a symmetric argument, (ii) holds, and (i) is satisfied unless $c'_2=c_1$,
$\varphi_0(u_{2,1})=c_1$, $\varphi_0(u_{2,2})\neq c_1$, and $c_1\in \{\varphi_0(w_1),\varphi_0(w_2))\}$ (the case $\varphi_0(u_{1,2})=c_1$
was excluded in the previous paragraph).  Since $c_1\in \{\varphi_0(w_1),\varphi_0(w_2))\}$ and $\varphi_0(u_{1,2})\neq c_1\neq \varphi_0(u_{2,2})$,
there is at most one dangerous color at $v_3$.  Hence, we can extend $\varphi_0$ to an $L$-coloring of $S_i$
satisfying (i) and (ii) by giving $v_1$ color $c_1$, $v_2$ color $c_2$, and $v_3$ a non-dangerous color different from $c_1$.
\item[(SIX)] Let $S_i=\{v_1,v_2,v_3,v_4\}$, where $v_1$ is not adjacent to $v_4$.
For $u\in S_i$, let $B(u)$ be the set of colors that $\varphi_0$ assigns to neighbors of $u$.
Let $R=\{u\in S_i:|B(u)|\le 2\}$.  For each $u\in R$, let $\varphi(u)\in L(u)\setminus B(u)$ be chosen arbitrarily;
the color $\varphi(u)$ clearly is not dangerous at $u$.

Consider now a vertex $u\in S_i\setminus R$ and let $L'(u)$ consist of the colors in $L(u)$ that are not dangerous.
Note that if $u\in \{v_1,v_4\}$, then since $|B(u)|\ge 3$, there is at most one dangerous color at $u$ and $|L'(u)|\ge 2$.
For $u\in \{v_2,v_3\}$, $|B(u)|\ge 3$ means that there are no dangerous colors at $u$ and $|L'(u)|\ge 3$.
Consequently $|L'(u)|\ge \deg_{G[S_i]} u$ for each $u\in V(S_i)\setminus R$.  Since $G[S_i]$ is connected and not a Gallai tree,
this implies that we can choose $\varphi$ on $V(S_i)\setminus R$ to be a proper $L'$-coloring of $G[S_i\setminus R]$.
Additionally, in case that $v_1,v_4\not\in R$ and $\{v_2,v_3\}\cap R\neq\emptyset$
(so $G[S_i\setminus R]$ either is a path or consists of two isolated vertices), observe that we can choose $\varphi$ so that
$\varphi(v_1)\neq\varphi(v_4)$.

Since we colored each vertex of $S_i$ by a non-dangerous color, the condition (i) holds unless
distinct vertices $x,y\in V(S_i)$ both receive the same color $c$ and this color belongs to $B(x)\cap B(y)$.
Note that $x,y\not\in R$, since each vertex $u\in R$ was colored by a color not in $B(u)$.
Moreover, since $\varphi$ is a proper coloring on $G[S_i\setminus R]$, we have $xy\not\in E(G[S_i])$, and thus $\{x,y\}=\{v_1,v_4\}$.
By the last condition in the choice of $\varphi$, since $\varphi(v_1)=\varphi(v_4)$, we have $R=\emptyset$.
Consequently, no other vertex of $S_i$ has color $c$, and $G[S_i]$ does not contain a monochromatic path
between $x$ and $y$.  Therefore, the condition (i) is satisfied.  The condition (ii) clearly holds,
since the vertices in $S_i$ have no $(D,X)$-big neighbors.
\end{itemize}
\end{proof}

\section{Discharging}\label{sec-disch}

By Lemmas~\ref{lemma-nonewspars} and \ref{lemma-delspars}, we can eliminate all sparsifiers
while not increasing the clustering too much (importantly, Lemma~\ref{lemma-nonewspars} implies that
the absence of sparsifiers is preserved in induced subgraphs, and thus we only need to eliminate
them once).  As we show next, the graphs without sparsifiers are so sparse that we can apply Corollary~\ref{cor-colin} for them.

\begin{lemma}\label{lemma-spaspar}
Let $t\ge 2$ be an integer, let $G$ be a $P''_t$-free graph drawn on a surface of Euler genus $g$ 
and let $X$ be a set of vertices of $G$ such that every $X$-external triangle is facial.
If $G$ is $(336t,X)$-sparsifier-free, then $G$ is $\bigl(3-\frac{1}{208t},4g+|X|+t+2\bigr)$-sparse.
\end{lemma}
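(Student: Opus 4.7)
My plan is to carry out a discharging argument on a $2$-cell embedding of $G$ on $\Sigma$ (if $G$ is not already $2$-cell embedded, restrict to the $2$-cell components or triangulate the non-cellular regions at no cost to the edge-count bound). Assign each vertex $v$ the initial charge $\mu_0(v)=\deg(v)-6$ and each face $f$ the charge $\mu_0(f)=2\ell(f)-6$, so that Euler's formula yields $\sum_v\mu_0(v)+\sum_f\mu_0(f)=6g-12$. The bound to be proved, $|E(G)|\le(3-\tfrac{1}{208t})|V(G)|+4g+|X|+t+2$, is equivalent, after multiplying by $2$ and using $\sum_v\deg(v)=2|E(G)|$, to $\sum_v\mu_0(v)\le -\tfrac{1}{104t}|V(G)|+8g+2|X|+2t+4$. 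So it suffices to redistribute charge so that, after discharging, the average vertex charge is at most $-\tfrac{1}{104t}$, with only $O(g+|X|+t)$ bounded-charge exceptional features absorbing the additive constant.

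The discharging rules transfer charge from $(336t,X)$-big vertices (those in $X$, of degree exceeding $D=336t$, or incident to a non-triangular face) and from non-triangular faces to their $(336t,X)$-small neighbors. Concretely, each big vertex donates approximately $1-\tfrac{1}{104t}$ per small neighbor, while each non-triangular face distributes its positive charge among its incident small vertices. The sparsifier-free hypotheses drive the local verification:
\begin{itemize}
\item (FOUR)-freeness guarantees every degree-$4$ small has at least two big neighbors, enough to pay off its $-2$ deficit;
\item (FIVE)-freeness guarantees one big neighbor for each degree-$5$ small, paying its $-1$ deficit;
\item (FT1) and (FT2) forbid the facial-triangle configurations of degree-$5$ and $6$ smalls in which multiple vertices would claim full units from a single shared big neighbor, overspending its budget;
\item (SIX)-freeness rules out the pairs of adjacent facial triangles of four degree-$6$ smalls with no big neighbors---the remaining configuration where the other rules fail to extract a strictly negative charge.
\end{itemize}

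The global input is $P''_t$-freeness, which I would use to bound the number of small neighbors a single big vertex can have, so that the per-neighbor donation rate remains feasible. In a $2$-cell embedding with triangular faces, consecutive neighbors of a vertex share a face and are thus adjacent, so $t$ consecutive small neighbors of a big vertex $u$ form a path on $t$ vertices. If any second vertex were adjacent to every vertex of this path, the subgraph induced by $u$, the second vertex, and the path would be $P''_t$, contradicting the hypothesis. Together with the surface genus, this caps how large the ``fans'' of consecutive small neighbors of a single big vertex can be, and explains the specific choice of constants $D=336t$ and $\varepsilon=\tfrac{1}{208t}$.

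The main obstacle I expect is the careful handling of degree-$6$ small vertices: they start with charge zero and must end strictly negative, yet (SIX) alone forbids only one very local configuration of them. Covering large patches of degree-$6$ smalls with no nearby big neighbors will require combining (SIX), (FT1), (FT2), and $P''_t$-freeness to produce either an implicit nearby big vertex (donating the needed charge) or a forbidden $P''_t$-subgraph (a contradiction). Once the case analysis is complete, summing the final charges, using that non-triangular faces retain non-negative charge, and absorbing the $|X|$, Euler-genus, and $t$-dependent contributions into the additive constant, yields the claimed sparsity bound.
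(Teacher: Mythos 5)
Your reformulation is sound (the claimed bound is equivalent to $\sum_v(\deg v-6)\le -\tfrac{1}{104t}|V(G)|+8g+2|X|+2t+4$, i.e.\ to a lower bound of order $|V(G)|/t$ on the total face-length excess $\beta=\sum_f(|f|-3)$), and your reading of the local role of (FOUR), (FIVE), (FT1), (FT2), (SIX) matches the paper's case analysis. The genuine gap is the global accounting. The ``exceptional features'' are not $O(g+|X|+t)$ in number: vertices incident with non-triangular faces can number up to $4\beta$, which may be linear in $|V(G)|$, and vertices of degree more than $D=336t$ can number $\Theta(|V(G)|/t)$. Moreover your rules are not locally consistent: by your own definition every vertex incident with a non-triangular face is big, so the rule ``non-triangular faces pay their incident small vertices'' transfers nothing; a vertex that is big only because it lies in $X$ or on a long face may have degree $4$ or $5$, hence charge at most $-1$ in your scheme, yet your rules require it to donate roughly $1$ per small neighbor; and a big-by-degree vertex donating only about $1-\tfrac{1}{104t}$ per neighbor retains final charge about $\tfrac{\deg v}{104t}-6$, whose total over such vertices can be $\Theta(|V(G)|/t)$, comparable to the $-\tfrac{1}{104t}$ per vertex you are trying to collect. (Relatedly, in your mirrored orientation the degree-$4$ and degree-$5$ small vertices have no ``deficit'' to pay off---they start below the target---while the problematic vertices are those of degree at least six and the big vertices' surplus, which must be shown to be absorbable.) The paper's resolution, which your plan lacks, is to run the argument the other way: give every big vertex initial charge $\deg v+1$, prove that after discharging \emph{every} vertex has charge at least $+\tfrac{1}{4t}$ (this is where sparsifier-freeness and $P''_t$-freeness enter), and, crucially, bound the number $\theta$ of big vertices by $4\beta+\tfrac{6(|V(G)|+g)}{D}+|X|$; comparing the resulting upper bound $6g-2\beta+7\theta$ on the total charge with the lower bound $\tfrac{|V(G)|}{4t}$ forces $\beta\ge\tfrac{|V(G)|}{208t}-g-|X|$, and substituting into Euler's formula gives the sparsity. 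It is exactly this quantitative trade-off against $\beta$ (many big vertices are possible only at the price of long faces, which by themselves make $G$ sparse) that your additive-constant bookkeeping cannot replace.

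A secondary error is your intended use of $P''_t$-freeness: it does not bound the number of small neighbors of a big vertex, nor the size of fans of consecutive small neighbors---such fans can be arbitrarily long as long as no second vertex is adjacent to $t$ consecutive members. In the paper it is used only to bound the length of paths of degree-four vertices all adjacent to the same two big vertices (Rule~3 and the unsafe degree-four case), and the constant $D=336t$ is chosen so that seven times the count $\tfrac{2|E(G)|}{D}$ of high-degree vertices is at most about $\tfrac{|V(G)|+g}{8t}$, i.e.\ safely below the per-vertex surplus $\tfrac{1}{4t}$; it has nothing to do with fan lengths. Finally, your worry about degree-six small vertices is on target, but in the correct orientation the resolution is that such a vertex must \emph{receive} $\tfrac18$ from a safe neighbor or else a sparsifier of type (FOUR), (FT2) or (SIX) appears; in your orientation it must instead find a receiver able to go further negative, a case your sketch does not resolve.
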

\begin{proof}
Let $D=336t$.  We can assume $|V(G)|>t+2\ge 4$, as otherwise the claim holds trivially.
We can also without loss of generality assume that no component of $G$ has at most two vertices,
as deleting such components only makes establishing the bound on the number of edges of $G$ harder.
In particular, every face of $G$ has length at least three.

Let $\beta=\sum_f (|f|-3)$, where the sum is over all faces $f$ of $G$,
and let $m$ be the number of faces of $G$.
By generalized Euler's formula, we have $|E(G)|\le |V(G)|+m+g-2$, and since $2|E(G)|=\sum_f |f|=3m+\beta$,
we conclude that
\begin{equation}\label{eq-dens}
|E(G)|<3(|V(G)|+g)-\beta.
\end{equation}
Note that at most $\sum_{f:|f|\ge 4} |f|\le \sum_f 4(|f|-3)\le 4\beta$ vertices of $G$ are incident with a face
of length at least four, and at most $\frac{2|E(G)|}{D}<\frac{6(|V(G)|+g)}{D}$ vertices of $G$ have degree more than $D$.
Consequently, the number $\theta$ of $(D,X)$-big vertices of $G$ satisfies
\begin{equation}\label{eq-numbig}
\theta\le 4\beta+\frac{6(|V(G)|+g)}{D}+|X|.
\end{equation}

Let us assign the initial charge to vertices of $G$ as follows:
\begin{itemize}
\item Each $(D,X)$-small vertex $v$ receives initial charge $\ch_0(v)=\deg v-6$.
\item Each $(D,X)$-big vertex $v$ receives initial charge $\ch_0(v)=\deg v+1$.
\end{itemize}
By (\ref{eq-dens}) and (\ref{eq-numbig}), the sum of the initial charges is
\begin{align}
\sum_{v\in V(G)} \ch_0(v)&=\Bigl(\sum_{v\in V(G)} \deg v-6\Bigr)+7\theta=2|E(G)|-6|V(G)|+7\theta\nonumber\\
&<6g-2\beta+7\Bigl(4\beta+\frac{6(|V(G)|+g)}{D}+|X|\Bigr)\nonumber\\
&\le 7g+7|X|+26\beta+\frac{|V(G)|}{8t}.\label{eq-initial}
\end{align}
We say a vertex $v$ is \emph{safe} if $v$ is $(D,X)$-small and
\begin{itemize}
\item $\deg v=4$ and $v$ is contained in a facial triangle $vu_1u_2$ such that the vertices $u_1$ and $u_2$ are $(D,X)$-big, or
\item $\deg v=5$ and $v$ has at least two $(D,X)$-big neighbors, or
\item $\deg v=6$ and $v$ has a $(D,X)$-big neighbor, or
\item $\deg v\ge 7$.
\end{itemize}
We now redistribute the charge according to the following rules (see Figure~\ref{fig-rules}):

\begin{figure}
\begin{center}
\includegraphics{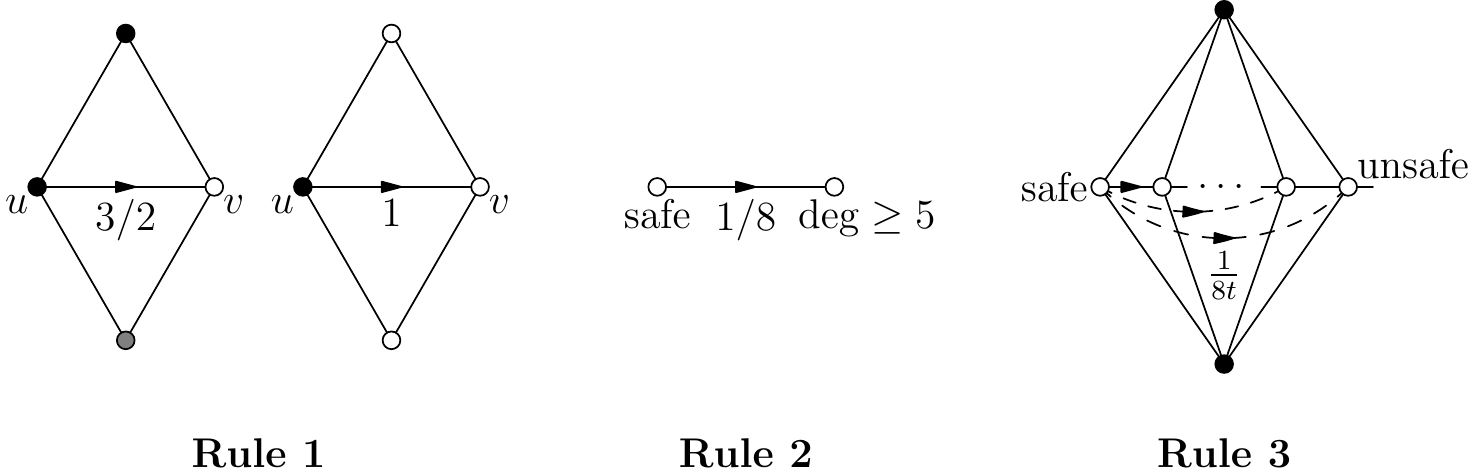}
\end{center}
\caption{The discharging rules. White vertices are $(D,X)$-small, black are $(D,X)$-big, gray may be either.}\label{fig-rules}
\end{figure}

\begin{description}
\item[Rule 1:] Suppose $uv\in E(G)$, $u$ is $(D,X)$-big and $v$ is $(D,X)$-small.
If there exists a facial triangle $uvz$ such that the vertex $z$ is $(D,X)$-big, then $u$ sends $3/2$ to $v$,
otherwise $v$ sends $1$ to $v$.
\item[Rule 2:] Each safe vertex sends $1/8$ to each of its $(D,X)$-small neighbors
of degree at least five.
\item[Rule 3:] If $P$ is a path in $G$, the first vertex $u$ of $P$ is safe,
all other vertices of $P$ are unsafe $(D,X)$-small vertices of degree four,
and all vertices of $P$ have two common $(D,X)$-big neighbors,
then $u$ sends $\frac{1}{4t}$ to the last vertex of $P$.
\end{description}
We claim that the final charge $\ch(v)$ of each vertex $v$ of $G$ after the charge redistribution is at least $\tfrac{1}{4t}$.
\begin{itemize}
\item If $v$ is $(D,X)$-big, then $v$ only sends charge by Rule 1.  If $v$
has $s$ $(D,X)$-small and $b$ $(D,X)$-big neighbors, then it sends
at most $(s-2b)\times 1+2b\times 3/2=s+b=\deg v$ units of charge.  Hence, its final charge
is at least $\ch_0(v)-\deg v = 1>\tfrac{1}{4t}$.

\item If $v$ is safe, then an inspection of the definition of a safe vertex shows that the sum of its initial charge and the amount received
by Rule 1 is at least $\max(\deg v-6,1)$.  For each adjacent unsafe vertex $z$,
if $z$ has degree at least five then $v$ sends $1/8$ to $z$ by Rule 2.
If $z$ has degree four, then since $G$ is $P''_t$-free, the longest path
starting with the edge $vz$ and satisfying the conditions of Rule 3 has at most $t-1$
vertices, and thus $v$ sends at most $(t-2)\times \frac{1}{8t}<\tfrac{1}{8}$ using Rule 3 to the vertices
of this path.  Therefore,
$$\ch(v)\ge \max(\deg v-6,1)-\frac{1}{8}\deg v\ge \frac{1}{8}\ge \frac{1}{4t}.$$

\item Suppose $v$ is an unsafe $(D,X)$-small vertex of degree six.  Since $v$ is not safe, all its neighbors are $(D,X)$-small.
If any of them is safe, then $v$ receives $1/8$ by Rule 2 and its final charge is at least $\ch_0(v)+\tfrac{1}{8}=\tfrac{1}{8}\ge\tfrac{1}{4t}$.

Hence, suppose that all neighbors of $v$ are unsafe.  Since all faces incident with $v$ are 2-cell faces of length three and $G$ is a simple graph,
the neighbors of $v$ form a cycle $C$.  Since every $X$-external triangle in $G$ is facial, the cycle $C$ is induced, and since all
faces incident with vertices of $C$ have length three, this implies that each vertex of $C$ has degree at least four in $G$.
If any neighbor of $v$ had degree four, then it would have at most one
$(D,X)$-big neighbor, and thus it would form a $(D,X)$-sparsifier satisfying (FOUR); hence, we can
assume all neighbors of $v$ have degree five or six.

If $v$ has a neighbor $u_1$
of degree five, then since $\{u_1\}$ is not a $(D,X)$-sparsifier satisfying (FIVE), $u_1$ has a $(D,X)$-big neighbor $x$.
Let $u_2$ be the common neighbor of $x$, $u_1$ and $v$.  Since $u_2$ is not safe, it also has degree five.
However, then $\{u_1,u_2,v\}$ is a $(D,X)$-sparsifier satisfying (FT2).  Finally, if all neighbors of $v$
have degree six, then $v$ together with three consecutive neighbors forms a $(D,X)$-sparsifier satisfying (SIX).
In all the cases we obtain a contradiction, since $G$ is $(D,X)$-sparsifier-free.
\item Suppose $v$ is an unsafe $(D,X)$-small vertex of degree five, and let $u_1\ldots u_5$ be the cycle formed by the neighbors
of $v$.  Since $G$ does not contain a $(D,X)$-sparsifier satisfying (FIVE), we can assume that $u_1$ is $(D,X)$-big,
and since $v$ is not safe, $u_2$, \ldots, $u_5$ are $(D,X)$-small.  If any of them is safe, then $v$ receives
$1$ from $u_1$ by Rule 1 and additional $1/8$ by Rule 2, resulting in final charge at least $\ch_0(v)+1+\tfrac{1}{8}=\tfrac{1}{8}\ge \tfrac{1}{4t}$.

Suppose that $u_2$, \ldots, $u_5$ are unsafe.
If $\deg u_i=4$ for some $i\in \{2,\ldots,5\}$, then note that $u_i$ has at most one $(D,X)$-big neighbor
(this is clear for $i\in \{3,4\}$, and for $i\in\{2,5\}$, the second $(D,X)$-big neighbor would be adjacent to $u_1$,
making $u_i$ safe).  Hence $\{u_i\}$ would be a $(D,X)$-sparsifier satisfying (FOUR), which is a contradiction.
Since $u_2$ is unsafe, it follows that $\deg u_2=5$ and $u_1$ is its only $(D,X)$-big neighbor.  If $\deg u_3=5$
and all neighbors of $u_3$ are $(D,X)$-small, then $\{u_3\}$ is a $(D,X)$-sparsifier satisfying (FIVE).
If $\deg u_3=5$ and $u_3$ has a $(D,X)$-big neighbor, then $\{v,u_2,u_3\}$ is a $(D,X)$-sparsifier satisfying (FT1).
Finally, if $\deg u_3=6$, then since $u_3$ is unsafe, it has no $(D,X)$-big neighbor and 
$\{v,u_2,u_3\}$ is a $(D,X)$-sparsifier satisfying (FT2).  In all cases, we obtain a contradiction.

\item Suppose that $v$ is an unsafe $(D,X)$-small vertex of degree four.  Since $(G,X)$ does not contain a $(D,X)$-sparsifier
satisfying (FOUR), $v$ has two $(D,X)$-big neighbors $x$ and $y$, and since $v$ is unsafe, $x$ and $y$ are not consecutive
in the cycle induced by the neighbors of $v$.
Let $P$ be a maximal path of unsafe $(D,X)$-small vertices of degree four containing $v$ such that each vertex of $P$ is adjacent to $x$
and $y$.  Since $G$ is $P''_t$-free, $P$ has at most $t-1$ vertices.  Consider an end $v'$ of $P$.
If $|V(P)|\ge 3$ and $v'$ is adjacent to the other end of $P$,
then since every $X$-external triangle in $G$ is facial, we conclude that $V(G)=V(P)\cup \{x,y\}$.  This is a contradiction,
since we have assumed that $|V(G)|>t+1$.  Therefore, $v'$ has a neighbor $u$ outside of $V(P)\cup \{x,y\}$.
This neighbor is $(D,X)$-small since $v'$ is unsafe, and safe since it has two $(D,X)$-big neighbors $x$ and $y$
(if $\deg u=4$, its safety follows from the maximality of $P$).  Consequently $u$ sends $\frac{1}{8t}$ to $v$ by Rule 3,
and so does the neighbor of the other end of $P$.  Additionally, $u$ receives $1$ from each of $x$ and $y$ by Rule 3.
Therefore, the final charge of $v$ is at least $\ch_0(v)+2\times 1+2\times\frac{1}{8t}=\frac{1}{4t}$.

\item Finally, suppose $v$ is an (unsafe) $(D,X)$-small vertex of degree at most three.
Since $G$ is simple, $v$ has degree exactly three (if $v$ had degree two,
then since $v$ is only incident with 2-cell faces of length three, either $G$ would
contain a double edge joining the neighbors of $v$, or $G=K_3$ would have less than four vertices).
The neighbors of $v$ form a triangle, and this triangle cannot be $X$-external, since otherwise it would be facial
and $G=K_4$ would have at most four vertices.
Hence, all neighbors of $v$ are $(D,X)$-big, and $v$ receives $3/2$ from each of them by Rule 1.
It follows that the final charge of $v$ is $\ch_0(v)+3\times \tfrac{3}{2}=\tfrac{3}{2}>\tfrac{1}{4t}$.

\end{itemize}
In conclusion, the final charge of every vertex is indeed at least $\frac{1}{4t}$.  Since no charge is created or lost,
(\ref{eq-initial}) implies that
\begin{align*}
\frac{|V(G)|}{4t}&\le \sum_{v\in V(G)} \ch(v)=\sum_{v\in V(G)} \ch_0(v)\\
&\le 7g+7|X|+26\beta+\frac{|V(G)|}{8t}.
\end{align*}
It follows that
$$\beta\ge \frac{|V(G)|}{208t}-g-|X|.$$
Substituting into (\ref{eq-dens}), we obtain
$$|E(G)|<\Bigl(3-\frac{1}{208t}\Bigr)|V(G)|+4g+|X|,$$
implying that $G$ is $\bigl(\frac{1}{208t},4g+|X|+t+2\bigr)$-sparse.
\end{proof}

Let us now combine these claims.

\begin{corollary}\label{cor-nocut}
For every surface $\Sigma$ and integer $t$, there exists a function $\gamma_{\ref{cor-nocut}}:\mathbb{N}\to\mathbb{N}$
such that the following claim holds.
Let $G$ be a graph drawn on $\Sigma$ and let $X$ be a set of vertices of $G$ such that every $X$-avoiding triangle is facial.
Let $L$ be a $3$-list-assignment for $G$.  If $G$ is $P''_t$-free,
then every $L$-coloring $\psi$ of $X$ extends to an $L$-coloring $\varphi$ of $G$ with clustering at most $\gamma_{\ref{cor-nocut}}(|X|)$
such that if $X$ is $3$-solitary, then $\varphi$ isolates $X$.
\end{corollary}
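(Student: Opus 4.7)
The plan is to concatenate the three tools built up in the previous sections. Fix $D=336t$ and let $g$ be the Euler genus of $\Sigma$. The first step is to choose a maximal system $S_1,\ldots,S_m$ of pairwise separated $(D,X)$-sparsifiers in $G$ and set $G'=G-\bigcup_{i=1}^m S_i$. By Lemma~\ref{lemma-nonewspars}, every induced subgraph $H$ of $G'$ is $(D,X\cap V(H))$-sparsifier-free.

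The second step is to show that $G'$ is hereditarily $\bigl(3-\tfrac{1}{208t},\,b\bigr)$-sparse with $b=4g+|X|+t+2$. For this I apply Lemma~\ref{lemma-spaspar} to each induced subgraph $H$ of $G'$, with $X\cap V(H)$ in place of $X$. The $P''_t$-freeness and sparsifier-freeness of $H$ are immediate from the above. The remaining hypothesis---that every $(X\cap V(H))$-external triangle of $H$ is facial in $H$---is the only mildly delicate point: such a triangle $T$ contains a vertex of $V(H)\setminus X$, hence is $X$-external in $G$, hence facial in $G$ by assumption, and the $2$-cell face of $G$ bounded by $T$ has empty interior and is therefore unaffected by deletion of vertices outside $V(T)$, so $T$ remains facial in $H$. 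Lemma~\ref{lemma-spaspar} then gives $|E(H)|\le\bigl(3-\tfrac{1}{208t}\bigr)|V(H)|+4g+|X\cap V(H)|+t+2$, and the bound $|X\cap V(H)|\le|X|$ yields the desired hereditary sparsity of $G'$.

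The third step is to assemble the coloring. Apply Corollary~\ref{cor-colin} with $c=3$, $\varepsilon=1/(208t)$, and this $b$ to extend $\psi$ to an $L$-coloring $\varphi_0$ of $G'$ with clustering at most $\gamma_{\ref{cor-colin}}(b,|X|)$. Observe that if $X$ is $3$-solitary in $G$ then it is still $3$-solitary in $G'$ (deleting vertices only decreases the number of neighbors of a vertex into $X$), so the coloring $\varphi_0$ provided by the corollary also isolates $X$ in that case. Finally, Lemma~\ref{lemma-delspars} extends $\varphi_0$ to an $L$-coloring $\varphi$ of $G$ with clustering at most $(4D+1)\gamma_{\ref{cor-colin}}(b,|X|)$, and preserves the isolation of $X$. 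Setting
\[
\gamma_{\ref{cor-nocut}}(p):=(4\cdot 336t+1)\cdot\gamma_{\ref{cor-colin}}\bigl(4g+p+t+2,\,p\bigr)
\]
yields the required function.

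There is no genuine obstacle beyond orchestrating the lemmas: the one thing worth checking carefully is precisely the hereditariness point in the second step, namely that the hypothesis on $X$-external triangles propagates from $G$ to induced subgraphs of $G'$. Everything else is bookkeeping.
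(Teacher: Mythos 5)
Your proposal is correct and follows essentially the same route as the paper: delete a maximal system of pairwise separated $(336t,X)$-sparsifiers, use Lemma~\ref{lemma-nonewspars} and Lemma~\ref{lemma-spaspar} to get hereditary $\bigl(3-\tfrac{1}{208t},b\bigr)$-sparsity, apply Corollary~\ref{cor-colin}, and lift the coloring back with Lemma~\ref{lemma-delspars}, with the same clustering bound. The only differences are cosmetic: you spell out the propagation of the facial-triangle hypothesis to induced subgraphs and the persistence of $3$-solitarity under vertex deletion (points the paper leaves implicit), while omitting the harmless normalization $t\ge 2$ needed to invoke Lemma~\ref{lemma-spaspar} as stated.
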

\begin{proof}
Without loss of generality, we can assume $t\ge 2$.
Let $\gamma_{\ref{cor-colin}}:\mathbb{N}^2\to\mathbb{N}$ be the function from Corollary~\ref{cor-colin}
for $c=3$, $\varepsilon=\frac{1}{208t}$ and $\Sigma$.  Let $D=336t$, let $G$ be the Euler genus of $\Sigma$ and let
$$\gamma_{\ref{cor-nocut}}(p)=(4D+1)\gamma_{\ref{cor-colin}}(4g+p+t+2,p).$$
Let $b=4g+|X|+t+2$.
Let $S_1$, \ldots, $S_m$ be a maximal system of pairwise separated $(D,X)$-sparsifiers in $G$.
Let $G_0=G-\bigcup_{i=1}^m S_i$.  By Lemma~\ref{lemma-nonewspars}, every induced subgraph $H$ of $G_0$
is $(D,X\cap V(H))$-sparsifier-free, and by Lemma~\ref{lemma-spaspar}, $H$ is $(3-\varepsilon,4g+|X\cap V(H)|+t+2)$-sparse,
and thus also $(3-\varepsilon, b)$-sparse.  Therefore, $G_0$ is hereditarily $(3-\varepsilon,b)$-sparse.
By Corollary~\ref{cor-colin}, $\psi$ extends to an $L$-coloring $\varphi_0$ of $G_0$ with clustering
at most $\gamma_{\ref{cor-colin}}(b,|X|)$ such that if $X$ is $3$-solitary, then $\varphi_0$ isolates $X$.
By Lemma~\ref{lemma-delspars}, $\varphi_0$ further extends to an $L$-coloring $\varphi$ of $G$ with clustering
at most $(4D+1)\gamma_{\ref{cor-colin}}(b,|X|)=\gamma_{\ref{cor-nocut}}(|X|)$
such that if $X$ is $3$-solitary, then $\varphi$ isolates $X$.
\end{proof}

\section{Planar 3-trees}\label{sec-stacks}

\begin{figure}
\begin{center}
\includegraphics{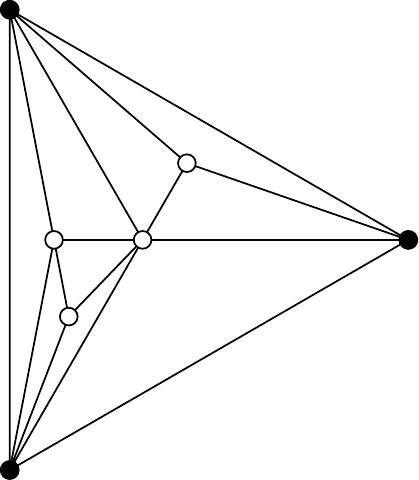}
\end{center}
\caption{A rooted planar 3-tree.}\label{fig-stacks}
\end{figure}

Next, we need to take care of non-facial $X$-external triangles.  Let us first consider an important special case.
A \emph{rooted planar $3$-tree} is any plane graph obtained from a 3-cycle $C$ by repeatedly selecting an internal triangular face,
adding a new vertex inside the face, and connecting the new vertex to each vertex of the face containing it; see Figure~\ref{fig-stacks} for an example.
Note that $C$ bounds the outer face of the resulting graph.  For a cycle $K$ in a plane graph $G$, let $G_K$ denote the subgraph of $G$ drawn in the closed disk bounded
by $K$.  Let us note the following important property of rooted planar $3$-trees.
\begin{observation}\label{obs-treelike}
If $G$ is a rooted planar $3$-tree with the outer face bounded by the triangle $C$ and $G\neq C$, then
there exists a unique vertex $v\in V(G)\setminus V(C)$ adjacent to all vertices of $C$, and for each triangle $K$ bounding
an internal face of the subgraph $G[V(C)\cup \{v\}]$, the graph $G_K$ is a rooted planar $3$-tree.
\end{observation}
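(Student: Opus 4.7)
The plan is to induct on $|V(G)|$, using the recursive construction of rooted planar $3$-trees. Let $v_1,\ldots,v_n$ (with $n\ge 1$) be the sequence of vertices added to $C$ in some construction producing $G$. I will argue that the desired vertex is $v:=v_1$, regardless of the chosen construction.

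For the base case $n=1$, we have $V(G)=V(C)\cup\{v_1\}$ and $v=v_1$ is adjacent to all three vertices of $C$ by the construction rule. The graph $G[V(C)\cup\{v\}]=G$ has exactly three internal faces, each bounded by a triangle $K_i$ consisting of $v$ and an edge of $C$, and each $G_{K_i}=K_i$ is trivially a rooted planar $3$-tree. The inductive step proceeds by observing that after $v_1$ is inserted, the three triangular internal faces $K_1,K_2,K_3$ of $G[V(C)\cup\{v_1\}]$ are the only internal faces. Since every subsequent insertion $v_j$ (for $j\ge 2$) happens inside a triangular face that is (by induction on $j$) contained in the closed disk bounded by exactly one $K_i$, and the new vertex is joined only to vertices on that face, we see that each $v_j$ with $j\ge 2$ lies strictly inside exactly one of the closed disks bounded by the $K_i$'s and has all its neighbors in $V(G_{K_i})$. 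Consequently $G_{K_i}$ is obtained from $K_i$ by a valid subsequence of insertions, hence is itself a rooted planar $3$-tree, establishing existence.

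For uniqueness, suppose $v'\in V(G)\setminus V(C)$ is adjacent to every vertex of $C$. If $v'\neq v_1$, then $v'=v_j$ for some $j\ge 2$, so by the analysis above the neighbourhood of $v'$ in $G$ lies in $V(G_{K_i})$ for some $i\in\{1,2,3\}$. But $V(G_{K_i})\cap V(C)$ consists of only the two endpoints of the edge of $C$ lying on $K_i$, contradicting $V(C)\subseteq N_G(v')$. Hence $v=v_1$ is the unique such vertex, and (as a byproduct) the identity of the first added vertex is determined by $G$ rather than by the construction sequence.

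The argument is mostly bookkeeping; the only point that requires care is the geometric claim that every vertex added after $v_1$ is confined to exactly one of the closed disks bounded by $K_1,K_2,K_3$ together with its neighbours. This follows because each insertion strictly subdivides a single triangular face into three, so the partition of the interior of $C$ into the three closed disks bounded by $K_1,K_2,K_3$ is preserved throughout the construction. No other subtleties arise, and I do not anticipate a genuine obstacle.
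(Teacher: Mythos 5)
Your argument is correct: the paper states Observation~\ref{obs-treelike} without proof, and your induction along the construction sequence (the first inserted vertex $v_1$ is the unique common neighbour of $V(C)$, every later vertex and its whole neighbourhood stay inside one of the three disks bounded by the triangles of $G[V(C)\cup\{v_1\}]$, so the restricted insertion sequence builds each $G_K$) is exactly the standard justification one would supply. No gaps.
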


In the situation of Observation~\ref{obs-treelike}, we say that $v$ is the \emph{tip} of $G$ and the graphs $G_K$ are the \emph{children} of $G$.
Rooted planar $3$-trees are problematic from the precoloring extension perspective: For a non-negative integer $k$, let $G_k$
denote the complete rooted planar $3$-tree of depth $k$, i.e., $G_0$ is a $3$-cycle and for $k\ge 1$, $G_k$ is the rooted planar $3$-tree
whose children are all copies of $G_{k-1}$.  Let $C=v_1v_2v_3$ be the triangle bounding the outer face of $G_k$ and let $\psi$ be the precoloring of $C$
such that $\psi(v_i)=i$ for $i\in\{1,2,3\}$.  Then no matter which of the colors $1$, $2$, or $3$ we use to color the tip
of $G_k$, there will be a child of $G_k$ such that all three colors appear on the triangle bounding its outer face.  Using this observation,
an easy inductive argument shows that in any $3$-coloring of $G_k$ extending $\psi$, the sum of the sizes of the clusters containing $v_1$,
$v_2$, and $v_3$ is at least $k+3$, and thus the coloring cannot have clustering smaller than $k/3+1$.  However, note that $G_k$ contains $P''_{k+1}$
as a subgraph, and fortunately $P''_t$-free rooted planar $3$-trees behave much nicer with respect to clustered coloring, as shown in the following lemma.

\begin{lemma}\label{lemma-stacks}
Let $t$ be a positive integer, let $G$ be a rooted planar $3$-tree and let $L$ be a $3$-list-assignment for $G$, and let $r$
be a vertex incident with the outer face of $G$. If $G$ is $P''_t$-free, then every $L$-coloring $\psi$ of the cycle $C$
bounding the outer face of $G$ extends to an $L$-coloring $\varphi$ of $G$ with clustering at most $3t$
such that either
\begin{itemize}
\item $\varphi$ isolates $V(C)$, or
\item $\psi$ uses three distinct colors on $C$, the tip of $G$ has list $\{\psi(u):u\in V(C)\}$,
$\varphi$ isolates $V(C)\setminus\{r\}$, and the cluster of $\varphi$ containing $r$ has size less than $t$.
\end{itemize}
\end{lemma}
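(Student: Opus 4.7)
The plan is to prove the lemma by induction on $|V(G)|$. The base case $G=C$ is immediate with $\varphi=\psi$. For the inductive step, Observation~\ref{obs-treelike} yields a unique tip $v$ and three children $G^{(1)}, G^{(2)}, G^{(3)}$, each a $P''_t$-free rooted planar $3$-tree with outer triangle $K^{(i)}\subseteq V(C)\cup\{v\}$. I would pick a color for $v$, recurse on each child using the inductive hypothesis, and glue the results.

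If $L(v)\not\subseteq\psi(V(C))$, I set $\varphi(v)=c$ for some $c\in L(v)\setminus\psi(V(C))$, so $c$ is distinct from every color on $V(C)$ and is unique on each $K^{(i)}$. For each child whose $K^{(i)}$ carries only two distinct colors, the second alternative of the lemma cannot trigger (it needs three distinct outer-face colors), and induction gives isolation of $V(K^{(i)})$. For each child whose $K^{(i)}$ carries three distinct colors, I apply induction with $r^{(i)}=v$; in either resulting alternative, $V(K^{(i)})\setminus\{v\}$ is isolated inside $G^{(i)}$, and in the second alternative the cluster of $v$ inside $G^{(i)}$ has size less than $t$. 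The glued $\varphi$ then isolates $V(C)$, the cluster of $v$ in $G$ has size at most $1+3(t-1)\le 3t$, and every other cluster lies inside a single child and is bounded by induction.

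If instead $L(v)=\psi(V(C))$, which forces $\psi$ to use three distinct colors on $V(C)$, the first alternative becomes unattainable: every choice of $\varphi(v)\in L(v)$ matches some $\psi(v_i)$. The only choice consistent with isolating $V(C)\setminus\{r\}$ is $\varphi(v)=\psi(r)$. The two children whose outer face contains $r$ then receive only two colors on $K^{(i)}$, so induction isolates $V(K^{(i)})$. The unique remaining child $G^{(*)}$, with outer face $K^{(*)}=v'v''v$ for $\{v',v''\}=V(C)\setminus\{r\}$, receives three distinct colors, and I apply induction with $r^{(*)}=v$ to isolate $\{v',v''\}$ and to keep the cluster of $v$ in $G^{(*)}$ below $t$. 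The resulting $\varphi$ isolates $V(C)\setminus\{r\}$ in $G$.

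The main obstacle is the strict bound $|M|<t$ on the cluster $M$ of $r$ in $G$: a naive induction only gives $|M|\le 1+(t-1)=t$. I close this off-by-one gap by a direct structural analysis of the construction. Following the recursion, $M$ is precisely $\{r,v^{(0)},v^{(1)},\ldots,v^{(k-1)}\}$, where $v^{(0)}=v$ and each $v^{(i+1)}$ is the tip placed inside the face of the subtree at $v^{(i)}$ whose triangle contains the edge $v'v''$; the $v^{(i)}$ form a path in $G$ and each is adjacent to both $v'$ and $v''$. Since $r$ is likewise adjacent to both $v'$ and $v''$, the subgraph of $G$ induced on $\{v',v''\}\cup M$ contains a copy of $P''_{k+1}$. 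The hypothesis that $G$ is $P''_t$-free then forces $k+1<t$, hence $|M|<t$. Combined with the per-child bounds from induction, this yields overall clustering at most $3t$.
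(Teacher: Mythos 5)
Your proposal is essentially correct and rests on the same key idea as the paper: in the hard case the monochromatic cluster of $r$ is a path all of whose vertices are adjacent to both vertices of $V(C)\setminus\{r\}$, so $P''_t$-freeness caps its size strictly below $t$. The organization differs, though. The paper builds that path up front: it takes the longest path $P$ starting at $r$ whose vertices are all adjacent to $x$ and $y$ and whose vertices other than $r$ have list $\{\psi(r),\psi(x),\psi(y)\}$, colors all of $V(P)$ with $\psi(r)$, and applies the induction hypothesis to the triangles bounding the internal faces of $G[V(C)\cup V(P)]$; for each such triangle the second alternative cannot trigger (either two of its vertices already share the color $\psi(r)$, or, by maximality of $P$, the tip of the corresponding subtree has a different list), so every subtree isolates its outer triangle and the cluster of $r$ is exactly $V(P)$, of size less than $t$. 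You instead recurse one tip at a time and recover the same path a posteriori by unfolding the recursion. That is sound in substance, but as a strict induction on the lemma as stated it has a soft spot: the inductive hypothesis applied to the middle child $G^{(*)}$ only tells you that the cluster of $v$ inside $G^{(*)}$ has size less than $t$; it says nothing about that cluster being a path of common neighbors of $v'$ and $v''$, so the assertion that $M$ is ``precisely'' $\{r,v^{(0)},\ldots,v^{(k-1)}\}$ does not follow from the hypothesis you are inducting on. To make your off-by-one fix rigorous you should either strengthen the induction hypothesis (in the second alternative, the cluster of $r$ is a path starting at $r$ all of whose vertices are adjacent to the two other outer vertices --- then $P''_t$-freeness gives $|M|<t$ immediately and the induction closes), or adopt the paper's device of constructing $P$ before recursing, which yields that structure for free. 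With that adjustment the rest of your bookkeeping (two-colored side triangles forcing the first alternative, the easy-case cluster of $v$ of size at most $1+3(t-1)\le 3t$, clusters otherwise confined to single children) matches the paper's argument.
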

\begin{proof}
We prove the claim by induction on the number of vertices of $G$.  Let $C=rxy$.
The claim is clear if $G=C$, and thus assume that $V(G)\neq V(C)$.  Let $v$ be the tip of $G$.

If there exists a color $c\in L(v)\setminus\{\psi(r),\psi(x),\psi(y)\}$, then we color $v$ by $c$ and
extend this coloring to the children $G_{rxv}$, $G_{ryv}$, $G_{xyv}$ by the induction hypothesis,
with $v$ playing the role of the vertex $r$.  Observe that the resulting coloring $\varphi$ isolates $V(C)$ and that
the cluster containing $v$ has size at most $3(t-1)-2<3t$, and thus $\varphi$ has clustering at most $3t$.

\begin{figure}
\begin{center}
\includegraphics{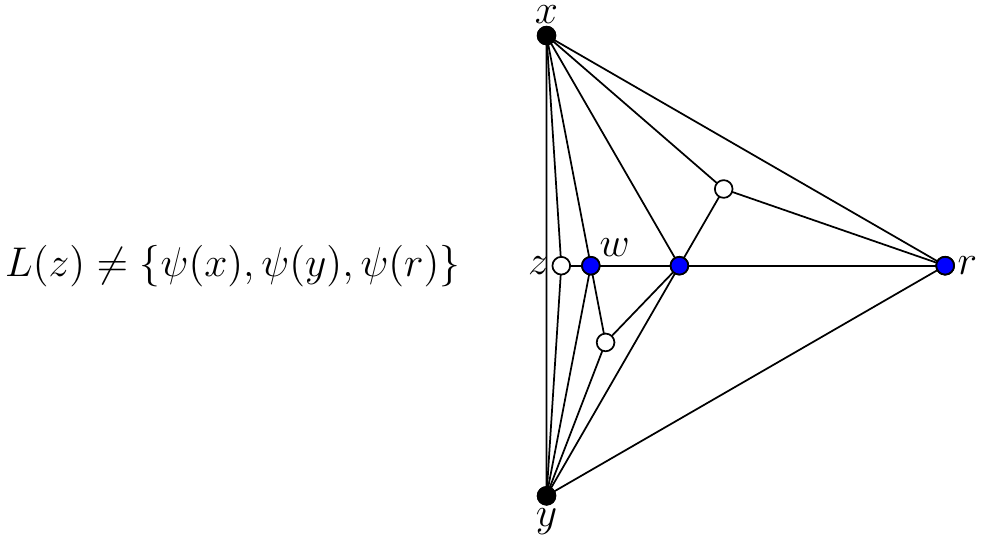}
\end{center}
\caption{The path $P$ from the proof of Lemma~\ref{lemma-stacks}, drawn in blue.}\label{fig-longest}
\end{figure}

Hence, we can assume that $L(v)=\{\psi(r),\psi(x),\psi(y)\}$, and in particular $\psi$ uses three distinct colors on $C$.
Let $P$ be the longest path in $G$ starting in $r$ such that all vertices of $P$ are adjacent to both $x$ and $y$
and all vertices in $V(P)\setminus\{r\}$ have list $\{\psi(r),\psi(x),\psi(y)\}$, see Figure~\ref{fig-longest} for an illustration.
Note that $|V(P)|\ge 2$, and since $G$ is $P''_t$-free, $P$ has at most $t-1$ vertices.  Let $\varphi(u)=\psi(r)$ for 
$u\in V(P)$.  Consider any triangle $K$ bounding an internal face of $G[V(C)\cup V(P)]$.  If $K$ contains an edge of $P$,
then two of its vertices have the same color $\psi(r)$.  Otherwise $K=wxy$, where $w$ is the last vertex of $P$,
and by the maximality of $P$, if $G_K\neq K$, then the tip $z$ of $G_K$ has list different from $\{\varphi(w),\psi(x),\psi(y)\}$.
Therefore, in either case the induction hypothesis shows that the coloring of $K$ extends to an $L$-coloring of $G_K$ of clustering at most $3t$
which isolates $V(K)$.  Combining these colorings gives an $L$-coloring of $G$ of clustering at most $3t$ such that the only clusters intersecting $V(C)$
are $\{x\}$, $\{y\}$, and $V(P)$.  Since $|V(P)|<t$, the conclusion of the lemma holds.
\end{proof}

Let us now consider a graph $G'$ obtained from a graph $G$ drawn on a surface by filling in triangular faces of $G$ by rooted planar $3$-trees.
We want to use Lemma~\ref{lemma-stacks} in order to extend a coloring of $G$ to a coloring of $G'$ without increasing the clustering substantially.
To do so, we need to ensure that a single vertex of $G$ is not chosen to play the role of $r$ in too many incident triangles.
Let $Z$ be a set of faces of $G$.  A function $\pi:Z\to V(G)$ is a \emph{pointer system}
for $Z$ if for each $f\in Z$, the vertex $\pi(f)$ is incident with $f$.
If each vertex is assigned to at most $\theta$ faces, we say that $\pi$ has
\emph{thickness} at most $\theta$.
\begin{observation}\label{obs-pointer}
Let $G$ be a graph drawn on a surface of Euler genus $g$, let $Z$ be a set of faces of $G$, and let $X$ be a set of vertices
of $G$ such that each face in $Z$ is incident with a vertex not belonging to $X$.  Then there exists a pointer system $\pi$
for $Z$ of thickness at most $12(g+1)+2|X|$ such that $\pi(Z)\cap X=\emptyset$.
\end{observation}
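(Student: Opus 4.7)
The plan is to reduce to checking a Hall-type condition on the bipartite incidence graph $B$ with parts $Z$ and $V(G)\setminus X$, where $f\in Z$ is joined to $v\in V(G)\setminus X$ precisely when $v$ lies on the boundary of $f$. By the hypothesis on $Z$, every $f\in Z$ has at least one neighbor in $B$, and a pointer system of the required form is exactly a function $\pi:Z\to V(G)\setminus X$ with $\pi(f)\in N_B(f)$ and $|\pi^{-1}(v)|\le\theta$ for every $v$, where $\theta:=12(g+1)+2|X|$. By the defect version of Hall's theorem (equivalently, Hall's theorem applied to the bipartite graph in which each $v\in V(G)\setminus X$ is duplicated into $\theta$ copies), such $\pi$ exists if and only if $|S|\le\theta\,|N_B(S)|$ for every $S\subseteq Z$. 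Setting $Y:=N_B(S)$ and writing $F_{Y,X}$ for the set of faces in $Z$ whose boundary lies in $Y\cup X$, the task reduces to proving $|F_{Y,X}|\le\theta|Y|$ for every nonempty $Y\subseteq V(G)\setminus X$ (the case $Y=\emptyset$ being vacuous by the hypothesis on $Z$).

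To bound $|F_{Y,X}|$, I will work in the induced subgraph $H:=G[Y\cup X]$, which is a simple graph drawn on $\Sigma$. The key topological point is that every $f\in F_{Y,X}$ remains a face of $H$: deleting a vertex not incident with $f$ does not alter $f$, and by assumption the boundary of $f$ is contained in $Y\cup X$, so the deletions used to pass from $G$ to $H$ leave $f$ intact. Since $f\in Z$, its boundary contains a vertex outside $X$, which must lie in $Y$. Therefore
\[
|F_{Y,X}|\le\sum_{v\in Y}\bigl(\text{faces of $H$ incident with $v$}\bigr)\le\sum_{v\in Y}\deg_H(v)=2|E(H[Y])|+|E(H[Y,X])|.
\]

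To finish, I will apply the standard Euler-type bound $|E(H[Y])|\le 3|Y|+3g$ (this is the usual $3|Y|-6+3g$ when $|Y|\ge 3$, and trivially valid otherwise) together with the bipartite bound $|E(H[Y,X])|\le|Y|\cdot|X|$, giving $|F_{Y,X}|\le 6|Y|+6g+|Y|\cdot|X|$. A direct calculation then verifies $6|Y|+6g+|Y|\cdot|X|\le (12(g+1)+2|X|)|Y|$ for every $|Y|\ge 1$: rearranging, it becomes $6g\le (12g+6+|X|)|Y|$, which holds because the right-hand side is at least $12g+6\ge 6g$. The only real subtlety is the topological identification of $F_{Y,X}$ as a subset of the faces of $H$, exploiting that vertex deletion can only merge but never split faces; everything else is a standard defect Hall argument followed by routine Euler counting and a numerical check.
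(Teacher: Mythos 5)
Your argument is correct, but it goes by a genuinely different route than the paper. The paper's proof is a short greedy construction: since a graph on a surface of Euler genus $g$ has average degree less than $6(g+1)$, there is an ordering of $V(G)$ in which every vertex has fewer than $6(g+1)$ later neighbors; each face of $Z$ is pointed to its earliest incident vertex outside $X$, and then every face charged to $v$ is incident with an edge from $v$ either to $X$ or to a later vertex, of which there are at most $6(g+1)+|X|$, each bordering at most two faces — giving the thickness bound directly. You instead verify the defect Hall condition for the face--vertex incidence bipartite graph, which requires the topological observation that a face of $G$ all of whose incident vertices lie in $Y\cup X$ survives as a face of the induced subgraph $G[Y\cup X]$ (deletions cannot merge across a boundary that is fully retained), followed by an Euler-formula count of $E(G[Y])$ and of $Y$--$X$ edges. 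That face-preservation step is the real content of your proof and you argue it correctly; the paper avoids needing it at all, at the price of the (equally standard) degeneracy-ordering trick. One small imprecision: the step ``number of faces of $H$ incident with $v$ is at most $\deg_H(v)$'' fails when $\deg_H(v)=0$ (an isolated vertex is incident with one face); for $v\in Y$ this can only happen when $v$ is isolated in $G$ itself, and in any case replacing $\deg_H(v)$ by $\deg_H(v)+1$ costs an extra $|Y|$, which your final numerical slack $6g\le(12g+6+|X|)|Y|$ easily absorbs, so the bound $12(g+1)+2|X|$ still comes out. Your approach is heavier but more robust (it rules out every Hall-type obstruction by a density count); the paper's is shorter and purely constructive.
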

\begin{proof}
By generalized Euler's formula, every graph drawn on a surface of Euler genus $g$ has average degree less than $6(g+1)$
(actually, the tight bound is given by Heawood's formula, but for our purposes even this very loose bound suffices).
Consequently, there exists an ordering $\prec$ of $V(G)$ such that each vertex has less than $6(g+1)$ neighbors that
appear after it in the ordering.  For each face $f\in Z$, let $\pi(f)$ be the first vertex incident with $f$ in the ordering $\prec$
that does not belong to $X$.  Observe that for each non-isolated vertex $v\in V(G)$, any face $f\in Z$ such that $\pi(f)=v$ is incident
either with an edge from $v$ to $X$, or from $v$ to a vertex $u$ such that $v\prec u$.  There are at most $6(g+1)+|X|$ such edges
and each of them is incident with at most two faces, giving the desired bound on the thickness of $\pi$.
\end{proof}

Let $G$ be a graph drawn on a surface $\Sigma$ and let $X$ be a set of vertices of $G$.  Moreover, if $\Sigma$ is the
sphere, assume that $|X|\ge 3$.
An $X$-external triangle $K$ in $G$ is \emph{$X$-empty} if $K$ is contractible and an open disk $\Lambda$
bounded by $K$ does not contain any vertex of $X$.  Such a disk is clearly unique when $\Sigma$ is not the sphere.
If $\Sigma$ is the sphere, it is also unique due to the additional assumption $|X|\ge 3$ (since $K$ is $X$-external,
we have $X\not\subseteq V(K)$).
We define $G_{K,X}$ to be the subgraph of $G$ drawn in the closure of $\Lambda$.  We say that $K$ is
\emph{$X$-pyramidal} if $K$ is $X$-empty and $V(K)$ is not $3$-solitary in $G_{K,X}$,
i.e., there exists a vertex in $G_{K,X}$ adjacent to all three vertices of $K$.
Combining the previous results, we obtain the following claim.
\begin{corollary}\label{cor-onlystacks}
For every surface $\Sigma$ and integer $t$, there exists a function $\gamma_{\ref{cor-onlystacks}}:\mathbb{N}\to\mathbb{N}$
such that the following claim holds.
Let $G$ be a graph drawn on $\Sigma$ and let $X$ be a set of vertices of $G$ (where $|X|\ge 3$ if $\Sigma$ is the sphere)
such that every $X$-external non-facial triangle in $G$ is $X$-pyramidal.  Let $L$ be a $3$-list-assignment for $G$.  If $G$ is $P''_t$-free,
then every $L$-coloring $\psi$ of $X$ extends to an $L$-coloring $\varphi$ of $G$ with clustering at most $\gamma_{\ref{cor-onlystacks}}(|X|)$
such that if $X$ is $3$-solitary, then $\varphi$ isolates $X$.
\end{corollary}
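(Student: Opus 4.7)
The plan is to peel off the $X$-external non-facial triangles using Lemma~\ref{lemma-stacks} and then apply Corollary~\ref{cor-nocut} to the remainder, the key insight being that under our hypothesis each $X$-external non-facial triangle bounds a rooted planar $3$-tree.

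First I would show by induction on $|V(G_{K,X})|$ that for every $X$-external non-facial triangle $K$ in $G$, the graph $G_{K,X}$ is a rooted planar $3$-tree whose outer face is bounded by $K$. Indeed, $K$ is $X$-pyramidal by hypothesis, so there is a tip $v \in V(G_{K,X}) \setminus V(K)$ adjacent to all three vertices $u_1, u_2, u_3$ of $K$. The edges $vu_i$ cut the closed disk bounded by $K$ into three closed sub-disks, each bounded by a triangle $K_i = vu_iu_{i+1}$ (indices mod $3$) lying inside the ($X$-empty) disk of $K$; since $v$ lies in the open $X$-empty disk we have $v \notin X$, and each $K_i$ is $X$-external. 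If $K_i$ is non-facial then it is $X$-pyramidal by hypothesis, so $G_{K_i,X}$ is a rooted planar $3$-tree by induction; otherwise $G_{K_i,X}=K_i$ trivially is. This exhibits $G_{K,X}$ as a rooted planar $3$-tree.

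Next, let $\mathcal{K}$ be a maximal family of $X$-external non-facial triangles with pairwise disjoint open disks, and let $G_0$ be obtained from $G$ by deleting every vertex that lies in the open disk of some triangle of $\mathcal{K}$. I would verify that every $X$-external triangle in $G_0$ is facial: any such triangle $K'$ that was non-facial in $G$ must have its disk contained in the disk of some $K \in \mathcal{K}$ by maximality, and the interior of that disk is empty in $G_0$. Since $X$ being $3$-solitary in $G$ implies $X$ is $3$-solitary in $G_0$ (removing vertices cannot increase any neighbor count), Corollary~\ref{cor-nocut} applied to $G_0$ yields an $L$-coloring $\varphi_0$ extending $\psi$ with clustering at most $\gamma_{\ref{cor-nocut}}(|X|)$ that isolates $X$ whenever $X$ is $3$-solitary in $G$.

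Each $K \in \mathcal{K}$ now bounds a face of $G_0$, and since $K$ is $X$-external this face is incident with a vertex outside $X$. Observation~\ref{obs-pointer} therefore produces a pointer system $\pi$ sending each such face to a boundary vertex outside $X$, of thickness at most $\theta = 12(g+1)+2|X|$, where $g$ is the Euler genus of $\Sigma$. For every $K \in \mathcal{K}$ I would apply Lemma~\ref{lemma-stacks} to the rooted planar $3$-tree $G_{K,X}$ with root $r=\pi(K)$, extending $\varphi_0$ on $V(K)$ to a coloring of $G_{K,X}$ of clustering at most $3t$ in which the only cluster that can pass between the interior and $V(K)$ enters through $r$ and has size less than $t$ inside $G_{K,X}$. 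Combining these extensions gives the desired $\varphi$. Since $r \notin X$ for every $K$, the lemma's isolation conclusion covers every vertex of $X \cap V(K)$, so isolation of $X$ is inherited from $\varphi_0$. A cluster of $\varphi$ either lies entirely in the interior of one $G_{K,X}$ (size at most $3t$) or equals a cluster $C_0$ of $\varphi_0$ enlarged by at most $t-1$ new vertices for each of the at most $\theta$ triangles $K$ with $\pi(K) \in C_0$, giving a final bound of the form $\gamma_{\ref{cor-onlystacks}}(|X|)=\max\bigl(3t,\,\gamma_{\ref{cor-nocut}}(|X|)(1+\theta(t-1))\bigr)$. The main obstacle is the structural claim in the first paragraph; once it is in hand, the rest is bookkeeping with Corollary~\ref{cor-nocut}, Lemma~\ref{lemma-stacks}, and Observation~\ref{obs-pointer}.
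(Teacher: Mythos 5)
Your overall strategy is exactly the paper's: establish that $G_{K,X}$ is a rooted planar $3$-tree for each $X$-external non-facial triangle $K$ (the paper merely asserts this; your induction via the tip and the three sub-triangles is a correct justification, including the point that the tip lies in the $X$-empty disk and hence outside $X$), delete the interiors of a suitable family of such triangles, color the rest by Corollary~\ref{cor-nocut}, and fill the disks back in via Observation~\ref{obs-pointer} and Lemma~\ref{lemma-stacks}, with the same bookkeeping for clustering and for isolation of $X$.

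There is, however, one step that fails as written: your choice of $\mathcal{K}$ as \emph{a maximal family of $X$-external non-facial triangles with pairwise disjoint open disks}, together with the claim that any surviving non-facial $X$-external triangle $K'$ ``must have its disk contained in the disk of some $K\in\mathcal{K}$ by maximality.'' Maximality of such a family only tells you that the $X$-empty disk $\Lambda'$ of $K'$ \emph{meets} the disk $\Lambda_K$ of some member $K$; since two triangles of an embedded simple graph cannot cross, the two disks are nested, but the nesting may go the wrong way, namely $\Lambda_K\subsetneq\Lambda'$. In that case emptying $\Lambda_K$ does not empty $\Lambda'$ (for instance the vertices of $K$ itself survive inside $\Lambda'$), so $K'$ can remain an $X$-external non-facial triangle of $G_0$ and the hypothesis of Corollary~\ref{cor-nocut} is violated; a concrete instance is a small triangle $K$ nested inside a larger non-facial triangle $K'$ with further vertices between them, where $\{K\}$ is already maximal in your sense. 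The repair is the paper's choice: take $K_1,\dots,K_m$ to be the triangles whose $X$-empty disks are \emph{inclusionwise-maximal} among all $X$-empty disks of $X$-external non-facial triangles; the non-crossing property then gives that these disks are pairwise disjoint, and every other such triangle either loses a vertex when the interiors are deleted or equals some $K_i$, so every $X$-external triangle of $G_0$ is indeed facial. With that correction the rest of your argument goes through; only note additionally that the thickness bound gives at most $\theta$ pointed faces \emph{per vertex} of a cluster $C_0$ (i.e.\ at most $\theta|C_0|$ triangles attach to $C_0$), which is what your displayed bound $\gamma_{\ref{cor-nocut}}(|X|)\bigl(1+\theta(t-1)\bigr)$ actually accounts for, so the final estimate is fine.
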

\begin{proof}
Let $g$ be the Euler genus of $\Sigma$, and let $\gamma_{\ref{cor-nocut}}:\mathbb{N}\to \mathbb{N}$ be the function from
Corollary~\ref{cor-nocut}.  Let $\gamma_{\ref{cor-onlystacks}}(p)=(12(g+1)+2p)t\gamma_{\ref{cor-nocut}}(p)$.

Observe that since each $X$-external non-facial triangle in $G$ is $X$-pyramidal,
$G_{K,X}$ is a rooted planar $3$-tree for every $X$-external non-facial triangle $K$.
Let $K_1$, \ldots, $K_m$ be $X$-external non-facial triangles in $G$ such that the open disks disjoint from $X$
bounded by them are inclusionwise-maximal, and observe that these open disks are disjoint.
Let $G_0$ be the graph obtained from $G$ by deleting vertices and edges
drawn in these disks.  Then every $X$-external triangle in $G_0$ is facial, and by Corollary~\ref{cor-nocut},
$\psi$ extends to an $L$-coloring $\varphi_0$ of $G_0$ with clustering at most $\gamma_{\ref{cor-nocut}}(|X|)$ such that if $X$ is $3$-solitary,
then $\varphi_0$ isolates $X$.  Let $Z$ be the set of faces of $G_0$ formed by the open disks disjoint from $X$ bounded by $K_1$, \ldots, $K_m$.
Since the triangles bounding these disks are $X$-external, by Observation~\ref{obs-pointer}, there exists
a pointer system $\pi$ for $Z$ of thickness at most $12(g+1)+2|X|$ such that $\pi(Z)\cap X=\emptyset$.
We extend $\varphi_0$ to each of the rooted planar $3$-trees $G_{K_1}$, \ldots, $G_{K_m}$ using Lemma~\ref{lemma-stacks},
where the vertex $r$ is chosen according to the pointer system $\pi$ (and in particular does not belong to $X$),
obtaining an $L$-coloring $\varphi$ of $G$ that extends $\psi$.
Since $\pi$ has thickness at most $12(g+1)+2|X|$, a cluster of $\varphi_0$ of size $k$ grows at most
to size $(12(g+1)+2|X|)tk$ in $\varphi$.  Therefore, $\varphi$ has clustering at most
$(12(g+1)+2|X|)t\gamma_{\ref{cor-nocut}}(|X|)=\gamma_{\ref{cor-onlystacks}}(|X|)$,
and moreover if $X$ is $3$-solitary, then $\varphi$ isolates $X$.
\end{proof}

\section{Contractible triangles}\label{sec-extend}

Next, let us deal with the planar case of the general problem, in a technical setting designed to deal with
separating triangles by a precoloring extension argument.  We say a non-empty set $X$ in a plane graph $G$
is \emph{cut off from the outer face} if there exists a triangle $K$ in $G$ that does not bound
the outer face and $X\subseteq V(G_K)\setminus V(K)$.  Let $q_G(X)=2|X|+1$ if $X$ is cut off from the outer face
and $q_G(X)=2|X|$ otherwise.  Let us also define $q_G(\emptyset)=0$.

\begin{lemma}\label{lemma-ext}
For every integer $t$, there exists a function $\gamma_{\ref{lemma-ext}}:\mathbb{N}\to\mathbb{N}$
such that the following claim holds.  Let $G$ be a plane graph with the outer face bounded by a triangle $C$,
let $X$ be a set of vertices of $G$ containing $V(C)$ and let $L$ be a $3$-list-assignment for $G$.
If $G$ is $P''_t$-free, then every $L$-coloring $\psi$ of $X$ extends to an $L$-coloring $\varphi$ of $G$ with clustering
at most $\gamma_{\ref{lemma-ext}}(q_G(X\setminus V(C)))$
such that if $X=V(C)$ and $X$ is $3$-solitary, then $\varphi$ isolates $X$.
\end{lemma}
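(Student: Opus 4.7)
The plan is to prove Lemma~\ref{lemma-ext} by induction on the lexicographic pair $(q_G(Y),|V(G)|)$, where $Y=X\setminus V(C)$, reducing via a precoloring-extension along non-facial triangles to Corollary~\ref{cor-onlystacks}. First I would dispose of the easy case: if every $X$-external non-facial triangle in $G$ is already $X$-pyramidal, Corollary~\ref{cor-onlystacks} applied to $G$, $X$, $\psi$ yields an $L$-coloring extending $\psi$ with clustering at most $\gamma_{\ref{cor-onlystacks}}(|X|)$, and since $|X|\le |Y|+3\le q_G(Y)/2+3$ this is absorbed by any $\gamma_{\ref{lemma-ext}}(q)\ge\gamma_{\ref{cor-onlystacks}}(q+3)$; the isolation clause is exactly the one provided by Corollary~\ref{cor-onlystacks} when $X=V(C)$ is $3$-solitary.

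Otherwise I would pick a non-facial triangle $K$ in $G$ to process. If $Y$ is cut off from the outer face, I let $K$ be an innermost non-outer triangle cutting $Y$ off; otherwise I let $K$ be an innermost non-$X$-pyramidal $X$-external non-facial triangle (this exists, or we are back in the easy case). I then extend $\psi$ to a coloring of $V(K)\setminus X$, choosing colors to make the values on $V(K)$ as distinct as the lists allow (using that any three lists of size three admit a system of distinct representatives, so the only obstruction is $\psi$ itself forcing two vertices of $V(K)\cap V(C)$ to share a color), and I split $G$ into $G_K$ (the planar subgraph drawn in the closed disk bounded by $K$, viewed with $K$ as its outer face) and $G'=G-(V(G_K)\setminus V(K))$ (still with outer face $C$). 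On the inner side $G_K$: when $K$ is $X$-empty, non-$X$-pyramidality forces $V(K)$ to be $3$-solitary in $G_K$, and the innermost choice of $K$ leaves no bad triangles inside, so Corollary~\ref{cor-onlystacks} applied in $G_K$ with $X_K=V(K)$ yields a coloring $\varphi_K$ that isolates $V(K)$ and has clustering at most $\gamma_{\ref{cor-onlystacks}}(3)$; when $K$ is not $X$-empty I recurse on $G_K$ via the lemma. On the outer side $G'$, I apply the lemma inductively with $X'=X\cup V(K)$. Finally I glue $\varphi_K$ and $\varphi'$ along $V(K)$; the isolation of $V(K)$ in $\varphi_K$ (in the $X$-empty case) limits the cluster merging across $V(K)$ to a factor of at most $|V(K)|=3$, so it suffices to impose the recurrence $\gamma_{\ref{lemma-ext}}(q)\ge 3\gamma_{\ref{lemma-ext}}(q-1)$ on the bound function.

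The main obstacle, and the technical heart of the argument, is verifying that each recursive call is made on a strictly smaller measure. The inner recursion on $G_K$ is manageable: when $K$ was chosen as the innermost cutter of $Y$, the set $Y$ is no longer cut off in $G_K$, so $q_{G_K}(Y)=2|Y|=q_G(Y)-1$; when $K$ was $X$-empty we avoid recursion altogether by applying Corollary~\ref{cor-onlystacks}; and when $K$ has $X$-vertices strictly inside, the inner precolored set is a strict subset of $Y$. The outer recursion on $G'$ is more delicate, because the new precolored set can gain up to three vertices from $V(K)\setminus V(C)$, threatening to push $q_{G'}$ above $q_G(Y)$. Here the specific form of $q_G$, with its $+1$ bonus for cut-off sets, is precisely what is needed: the loss of that cut-off bonus when we un-cut-off $Y$ by precoloring $K$, combined with a careful choice of $K$ (preferring, when possible, triangles sharing vertices with $V(C)$ to minimize the $V(K)\setminus V(C)$ contribution), keeps $q_{G'}\le q_G(Y)-1$ in the generic range of $|Y|$; the few small-$|Y|$ corner cases where no such economical $K$ exists are then dispatched via the $|V(G)|$-tiebreaker together with a direct application of the clean-case argument in $G'$. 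This bookkeeping is the analogue in the clustered-coloring world of the precoloring-extension step used in~\cite{weak}, and its success is what makes the lemma hold.
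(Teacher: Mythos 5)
Your overall plan (induct, peel off non-facial triangles, reduce the clean case to Corollary~\ref{cor-onlystacks}) matches the paper's, but the way you process the triangle $K$ contains a genuine gap. You precolor $V(K)$ up front and then recurse on the outer graph $G'$ with the enlarged precolored set $X'=X\cup V(K)$. This inflates the measure: $X'\setminus V(C)$ gains up to three vertices of $V(K)\setminus V(C)$ while losing perhaps only one vertex of $Y=X\setminus V(C)$ (and, in the case $Y$ is cut off by $K$, losing all of $Y$ but with $|Y|$ possibly as small as $0$ or $1$), so $q_{G'}(X'\setminus V(C))$ can strictly exceed $q_G(Y)$. Once the primary coordinate of your lexicographic measure increases, the $|V(G)|$ tiebreaker is irrelevant, and your proposed rescues --- ``preferring triangles sharing vertices with $V(C)$'' (such triangles need not exist) and ``dispatching small-$|Y|$ corner cases via the clean-case argument in $G'$'' (nothing forces $G'$ to be free of non-pyramidal triangles) --- are not arguments. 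The same move also kills the isolation clause in the crucial base situation $X=V(C)$, $Y=\emptyset$: after adding $V(K)$ to the precolored set, the inductive statement for $G'$ no longer promises that $V(C)$ is isolated, yet this is exactly the property the lemma must deliver (it is what Lemma~\ref{lemma-extsurf} and the separating-triangle recursion rely on).

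The paper avoids all of this by an asymmetric order that your write-up misses: it does \emph{not} precolor $K$. It first colors the outer graph $G'$ by induction with $X'=X\cap V(G')$ (so the outer measure strictly drops whenever $K$ has a vertex of $Y$ strictly inside, and is unchanged with fewer vertices in the $X$-empty case), and only then reads off the colors that $\varphi'$ happens to give to $V(K)$ and uses them as the precoloring of the inner graph $G_K$, in which $K$ plays the role of the outer triangle $C$ and hence $V(K)$ does not count toward $q_{G_K}$. The only delicate inner case is $X''\setminus V(K)=Y$ (all of $Y$ inside $K$); there the $+1$ in the definition of $q_G$ does its work: $K$ witnesses that $Y$ is cut off in $G$, so $q_G(Y)=2|Y|+1$, while choosing $G_K$ minimal guarantees $Y$ is not cut off in $G_K$, so $q_{G_K}(Y)=2|Y|<q_G(Y)$. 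With this ordering the isolation clause is also immediate: in the $X$-empty case $X'=X$, so if $X=V(C)$ is $3$-solitary in $G$ it is $3$-solitary in $G'$ and $\varphi'$ isolates it, while the inner coloring isolates $V(K)$ by the induction hypothesis. Finally, note that in the non-$X$-empty case the clusterings add (one cluster from each side can merge across the clique $K$), which is why the paper takes $\gamma_{\ref{lemma-ext}}(m)=2^m\gamma_{\ref{cor-onlystacks}}(m+3)$; your ``factor $3$'' recurrence is tied to the isolation of $V(K)$ and does not cover that case.
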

\begin{proof}
Let $\gamma_{\ref{cor-onlystacks}}:\mathbb{N}\to\mathbb{N}$ be the function from Corollary~\ref{cor-onlystacks} with $\Sigma$ being the sphere;
without loss of generality, we can assume that this function is non-decreasing.
Let $\gamma_{\ref{lemma-ext}}(m)=2^m\gamma_{\ref{cor-onlystacks}}(m+3)$.

We prove the claim by induction on the number of vertices of $G$.  Suppose first that there exists an $X$-external 
triangle in $G$ that is not $X$-empty (this implies that $X\neq V(C)$), and let $K$ be such a cycle with $G_K$ minimal.
Let $G'=G-(V(G_K)\setminus V(K))$ and $X'=X\cap V(G')$.
By the induction hypothesis, the restriction of $\psi$ to $X'$ extends to
an $L$-coloring $\varphi'$ of $G'$ with clustering at most $\gamma_{\ref{lemma-ext}}(q_{G'}(X'\setminus V(C)))$.
Let $X''=V(K)\cup (X\cap V(G_K))$ and let $\psi''$ be the $L$-coloring of $X''$ such that
$\psi''(v)=\varphi'(v)$ for $v\in V(K)$ and $\psi''(v)=\psi(v)$ for $v\in X''\setminus V(K)$.
By the induction hypothesis, $\psi''$ extends to an $L$-coloring $\varphi''$ of $G_K$
with clustering at most $\gamma_{\ref{lemma-ext}}(X''\setminus V(K))$.  Let $\varphi$ be the combination of $\varphi'$ and $\varphi''$.
Only the clusters intersecting $K$ can be merged, and since $K$ is a clique, only one cluster of $\varphi'$
and one cluster of $\varphi''$ is contained in the merged cluster. 
Note that $|X'\setminus V(C)|<|X\setminus V(C)|$ since $K$ is not $X$-empty, and thus $q_{G'}(X'\setminus V(C))<q_{G}(X\setminus V(C))$.
Moreover, either $|X''\setminus V(K)|<|X\setminus V(C)|$, or $X''\setminus V(K)=X\setminus V(C)$ and $K$
shows that $X\setminus V(C)$ is cut off from the outer face of $G$.  In the latter case, the minimality of $G_K$ implies
that $X''\setminus V(K)$ is not cut off from the outer face of $G_K$.  In either case, we conclude that $q_{G_K}(X''\setminus V(K))<q_G(X\setminus V(C))$.
Therefore, the clustering of $\varphi$ is
at most $\gamma_{\ref{lemma-ext}}(q_{G'}(X'\setminus V(C)))+\gamma_{\ref{lemma-ext}}(q_{G_K}(X''\setminus V(K)))\le \gamma_{\ref{lemma-ext}}(q_G(X\setminus V(C)))$.

Suppose now that $G$ contains an $X$-external non-facial $X$-empty triangle $K$ that is not $X$-pyramidal,
i.e., $V(K)$ is $3$-solitary in $G_K$.  Let $G'$, $X'=X$, $X''=V(K)$, $\varphi'$, $\varphi''$, and $\varphi$ be as in the previous paragraph.
The induction hypothesis now ensures that $\varphi''$ isolates $X''$, and thus the clustering of $\varphi$ is
at most $\max(\gamma_{\ref{lemma-ext}}(q_{G'}(X'\setminus V(C))),\gamma_{\ref{lemma-ext}}(q_{G_K}(\emptyset)))=\gamma_{\ref{lemma-ext}}(q_G(X\setminus V(C)))$.
Moreover, if $X=V(C)$ and $X$ is $3$-solitary in $G$, it is also $3$-solitary in $G'$, and thus
$\varphi'$ isolates $X$. In that case, it follows that $\varphi$ isolates $X$ as well.

Therefore, we can assume that every $X$-external non-facial triangle in $G$ is $X$-pyramidal.  By Corollary~\ref{cor-onlystacks},
$\psi$ extends to an $L$-coloring $\varphi$ of $G$ with clustering at most $\gamma_{\ref{cor-onlystacks}}(|X|)\le \gamma_{\ref{lemma-ext}}(q_G(X\setminus V(C)))$
such that if $X$ is $3$-solitary, then $\varphi$ isolates $X$.
\end{proof}

Using this lemma, it is easy to deal with non-facial contractible triangles in graphs on surfaces.

\begin{lemma}\label{lemma-extsurf}
For every surface $\Sigma$ and integer $t$, there exists a function $\gamma_{\ref{lemma-extsurf}}:\mathbb{N}\to\mathbb{N}$
such that the following claim holds.  Let $G$ be a graph drawn on $\Sigma$ without non-contractible triangles,
let $X$ be a set of vertices of $G$ and let $L$ be a $3$-list-assignment for $G$.
If $G$ is $P''_t$-free, then every $L$-coloring $\psi$ of $X$ extends to an $L$-coloring of $G$ with clustering
at most $\gamma_{\ref{lemma-extsurf}}(|X|)$.
\end{lemma}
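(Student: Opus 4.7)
The plan is to prove this by strong induction on $|V(G)|$, repeatedly cutting $G$ along non-facial $X$-external triangles and dispatching each resulting planar ``inside'' via Lemma~\ref{lemma-ext}. Since $G$ has no non-contractible triangle, any such triangle $K$ is contractible and bounds a disk $\Lambda$ (unique for non-sphere $\Sigma$; for the sphere I fix some canonical choice, for instance the disk missing a distinguished reference vertex). Let $G_K$ denote the subgraph drawn in $\overline{\Lambda}$, a plane graph with outer face $K$, and let $G' = G - (V(G_K)\setminus V(K))$; then $G'$ remains drawn on $\Sigma$ with no non-contractible triangles and with $K$ now facial in $G'$, so it is a valid smaller instance for the induction.

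In the base case, every $X$-external non-facial triangle of $G$ is $X$-pyramidal and I invoke Corollary~\ref{cor-onlystacks} directly. The sphere sub-case with $|X|<3$ is handled by first padding $X$ with up to three arbitrary vertices of $G$ (colored by any element of their lists), which only inflates $|X|$ by a constant. If instead $G$ contains a non-facial $X$-external triangle $K$ that is not $X$-pyramidal, I split into two sub-cases depending on whether $K$ is $X$-empty.

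If $K$ is not $X$-empty, then $X'=X\cap V(G')$ satisfies $|X'|<|X|$; the induction extends $\psi|_{X'}$ to $\varphi'$ on $G'$ with clustering at most $\gamma_{\ref{lemma-extsurf}}(|X|-1)$, after which Lemma~\ref{lemma-ext} extends the combined precoloring on $X''=V(K)\cup(X\cap V(G_K))$ to $\varphi''$ of $G_K$ with clustering at most $\gamma_{\ref{lemma-ext}}(2|X|+1)$. Since $V(K)$ is a triangle, gluing $\varphi'$ and $\varphi''$ only merges clusters pairwise through vertices of $V(K)$, so the combined clustering is at most $\gamma_{\ref{lemma-extsurf}}(|X|-1)+\gamma_{\ref{lemma-ext}}(2|X|+1)$, which I absorb by setting $\gamma_{\ref{lemma-extsurf}}(p)=\gamma_{\ref{lemma-extsurf}}(p-1)+\gamma_{\ref{lemma-ext}}(2p+1)$ for $p\ge 1$ (with base value large enough to cover the base case). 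If instead $K$ is $X$-empty but not $X$-pyramidal, then $V(K)$ is $3$-solitary in $G_K$, so although $|X|$ does not decrease, the strict drop in $|V(G)|$ still lets induction produce $\varphi'$ on $G'$ with clustering at most $\gamma_{\ref{lemma-extsurf}}(|X|)$, while Lemma~\ref{lemma-ext} applied to $G_K$ with $X''=V(K)$ returns a $\varphi''$ that isolates $V(K)$ via its isolation clause; hence no cluster of $\varphi'$ merges with any cluster of $\varphi''$ and the combined clustering stays at most $\gamma_{\ref{lemma-extsurf}}(|X|)$.

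The main obstacle I anticipate is precisely this last case, where $|X|$ does not decrease: without the isolation guarantee of Lemma~\ref{lemma-ext}, repeated cuts through $X$-empty non-$X$-pyramidal triangles would let the clustering balloon along a chain of cuts, but the $3$-solitary hypothesis is exactly strong enough to shut this down. A minor technicality is the sphere branch of Corollary~\ref{cor-onlystacks} requiring $|X|\ge 3$, which the padding described above circumvents.
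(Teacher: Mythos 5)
Your induction for a non-sphere $\Sigma$ is essentially the paper's proof: the same case split around a non-facial $X$-external triangle $K$ that is not $X$-pyramidal (sub-case ``$K$ not $X$-empty'', where $|X|$ strictly drops and the clusterings add, versus ``$K$ $X$-empty with $V(K)$ $3$-solitary in $G_K$'', where the isolation clause of Lemma~\ref{lemma-ext} prevents any merging), with Corollary~\ref{cor-onlystacks} in the base case. The genuine gap is your treatment of the sphere. First, the ``disk missing a distinguished reference vertex'' is not well defined when the reference vertex lies on the triangle itself. More seriously, padding $X$ with up to three arbitrary vertices right before invoking Corollary~\ref{cor-onlystacks} can destroy that corollary's hypothesis: if a padded vertex lands inside the disk $\Lambda$ witnessing that some triangle $K$ was $X$-pyramidal, then $K$ is no longer $X^+$-empty with respect to $\Lambda$, and with respect to the other disk it need not be $X^+$-pyramidal at all. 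For instance, take $K=abc$ with a single vertex $v$ adjacent to $a,b,c$ inside $\Lambda$, and outside only vertices $u,w$ with $u$ adjacent to $a,b$ and $w$ adjacent to $b,c$ (and $u$ to $w$); with $X=\emptyset$ the only non-facial triangle is $K$ and it is $X$-pyramidal, but padding with $v$ leaves $K$ an $X^+$-external non-facial triangle that is not $X^+$-pyramidal, so Corollary~\ref{cor-onlystacks} simply does not apply. And if instead you keep recursing on the sphere, the $|X|\ge 3$ problem recurs, since $X'=X\cap V(G')$ can shrink below $3$. The paper sidesteps all of this by never running the induction on the sphere: it applies Lemma~\ref{lemma-ext} to the disjoint union of $G$ with a triangle $C$ bounding the outer face, with $V(C)$ added to the precolored set; inside that lemma the precolored set always contains the outer triangle, so the size-three requirement is automatic throughout. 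You should adopt this (or an equivalent) fix rather than padding.

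A smaller accounting slip: setting $\gamma_{\ref{lemma-extsurf}}(p)=\gamma_{\ref{lemma-extsurf}}(p-1)+\gamma_{\ref{lemma-ext}}(2p+1)$ ``with a base value large enough to cover the base case'' is not enough, because the base case of the induction on $|V(G)|$ can occur at any value of $|X|$, so you need $\gamma_{\ref{lemma-extsurf}}(p)\ge\gamma_{\ref{cor-onlystacks}}(p)$ for every $p$, not only at $p=0$. Take, e.g., $\gamma_{\ref{lemma-extsurf}}(p)=\gamma_{\ref{cor-onlystacks}}(p)+\sum_{i\le p}\gamma_{\ref{lemma-ext}}(2i+1)$, or the paper's choice $(p+1)\gamma_{\ref{lemma-ext}}(2p+1)+\gamma_{\ref{cor-onlystacks}}(p)$ with the constituent functions assumed non-decreasing. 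With these repairs your argument coincides with the paper's.
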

\begin{proof}
Let $\gamma_{\ref{lemma-ext}}:\mathbb{N}\to\mathbb{N}$ be the function from Lemma~\ref{lemma-ext}, and
$\gamma_{\ref{cor-onlystacks}}:\mathbb{N}\to\mathbb{N}$ the function from Corollary~\ref{cor-onlystacks}.
We can assume that both of these functions are non-decreasing.
Let $\gamma_{\ref{lemma-extsurf}}(p)=(p+1)\gamma_{\ref{lemma-ext}}(2p+1)+\gamma_{\ref{cor-onlystacks}}(p)$.

If $\Sigma$ is the sphere, then the claim follows from Lemma~\ref{lemma-ext} applied to the disjoint union of $G$ with a triangle $C$
bounding the outer face.  Hence, suppose that $\Sigma$ is not the sphere.
We prove the claim by induction on $|V(G)|$.  If every $X$-external non-facial triangle in $G$ is $X$-pyramidal, then the claim
follows from Corollary~\ref{cor-onlystacks}.  Hence, we can assume that $G$ contains an $X$-external non-facial triangle $K$
which is not $X$-pyramidal, i.e., denoting by $\Lambda$ the open disk bounded by $K$ and by $G''$ the subgraph of $G$ drawn in the closure of $K$,
either $\Lambda$ contains a vertex of $X$ or $V(K)$ is $3$-solitary in $G''$.  Let $G'$ be the graph obtained from $G$ by
deleting the vertices and edges drawn in $\Lambda$ and let $X'=X\cap V(G')$.  By the induction hypothesis,
the restriction of $\psi$ to $X'$ extends to an $L$-coloring $\varphi'$ of $G'$ with clustering at most $\gamma_{\ref{lemma-extsurf}}(|X'|)$.
Let $X''=V(K)\cup (X\cap V(G''))$ and let $\psi''$ be the $L$-coloring of $X''$ matching $\varphi'$ on $K$ and
$\psi$ on the rest of the vertices.  By Lemma~\ref{lemma-ext}, $\psi''$ extends to an $L$-coloring $\varphi''$ of $G''$ with
clustering at most $\gamma_{\ref{lemma-ext}}(q_{G''}(X''\setminus V(K)))\le \gamma_{\ref{lemma-ext}}(2|X|+1)$,
where $\varphi''$ isolates $V(K)$ if $X''=V(K)$.  Let $\varphi$ be the union of $\varphi'$ and $\varphi''$.
If $K$ is not $X$-empty, then $|X'|<|X|$ and $\varphi$ has clustering at most
$\gamma_{\ref{lemma-extsurf}}(|X'|)+\gamma_{\ref{lemma-ext}}(2|X|+1)\le \gamma_{\ref{lemma-extsurf}}(|X|-1)+\gamma_{\ref{lemma-ext}}(2|X|+1)\le \gamma_{\ref{lemma-extsurf}}(|X|)$.
Otherwise $\varphi''$ isolates $V(K)$ and $\varphi$ has clustering at most
$\max(\gamma_{\ref{lemma-extsurf}}(|X'|),\gamma_{\ref{lemma-ext}}(2|X|+1))=\gamma_{\ref{lemma-extsurf}}(|X|)$.
\end{proof}

\section{The general case}\label{sec-general}

The proof of our main result is finished by using a precoloring extension argument again to deal with non-contractible triangles.

\begin{proof}[Proof of Theorem~\ref{thm-main}]
Let $\gamma_{\ref{lemma-extsurf}}:\mathbb{N}\to\mathbb{N}$ be the function from Lemma~\ref{lemma-extsurf}.
We prove the theorem by induction on the Euler genus $g$ of $\Sigma$; hence, suppose that the claim holds for all surfaces
of smaller Euler genus.  Since there are only finitely many such surfaces, it follows that there exists
a function $\gamma':\mathbb{N}\to\mathbb{N}$ such that every $P''_t$-free graph drawn on a surface of Euler genus less than $g$ is
$p$-precoloring-$3$-choosable with clustering at most $\gamma'(p)$ for every integer $p$.
Without loss of generality, we can assume that both functions $\gamma_{\ref{lemma-extsurf}}$ and $\gamma'$ are non-decreasing.
Let $\gamma(p)=\max(\gamma_{\ref{lemma-extsurf}}(p),2\gamma'(p+6))$.

Let $G$ be a $P''_t$-free graph drawn on $\Sigma$, let $X$ be a set of vertices of $G$, let $L$ be a $3$-list-assignment for $G$
and let $\psi$ be an $L$-coloring of $X$.   If $G$ has no non-contractible triangle, then by Lemma~\ref{lemma-extsurf},
$\psi$ extends to an $L$-coloring of $G$ of clustering at most $\gamma_{\ref{lemma-extsurf}}(|X|)\le \gamma(|X|)$.

Hence, suppose that $K$ is a non-contractible triangle in $G$.  Let $X'=X\cup V(K)$ and let us extend $\psi$ to an $L$-coloring
of $X'$ arbitrarily.  Cutting $\Sigma$ along $K$ and patching the resulting holes gives us a graph $G_0$
drawn on a surface (or two surfaces, in case $K$ separates $\Sigma$ into two parts) of Euler genus less than $g$.
Let $X_0$ be the set of vertices of $G_0$ corresponding to $X'$ (where each vertex of $K$ corresponds to two vertices),
with coloring $\psi_0$ matching $\psi$ on the corresponding vertices.  By the induction hypothesis, $\psi_0$
extends to an $L$-coloring of $G_0$ with clustering at most $\gamma'(|X_0|)\le \gamma'(|X|+6)$.
Gluing the two copies of $K$ in $G_0$ back together gives an $L$-coloring of $G$ that extends $\psi$ with clustering
at most $2\gamma'(|X|+6)\le \gamma(|X|)$.
\end{proof}

\section*{Acknowledgements}

I would like to thank David Wood for organizing the MATRIX-IBS workshop ``Structural Graph Theory Downunder III'',
where I learned about the problem, and to him and Chun-Hung Liu for fruitful discussions.

\bibliographystyle{plain}
\bibliography{../data.bib}

\end{document}